\documentclass[a4paper,12pt]{article}

\usepackage[utf8]{inputenc}
\usepackage[english]{babel}
\usepackage{enumitem}
\usepackage{float}
\usepackage{amsmath}
\usepackage{amssymb}
\usepackage{amsthm}
\usepackage{amsfonts}
\usepackage{booktabs}
\usepackage{epsfig}
\usepackage{epstopdf}
\usepackage{cite}
\usepackage{multirow} % multirow for format of table
\usepackage{algorithm,algpseudocode}
\usepackage{subcaption}
\usepackage{xcolor}
\usepackage[linkcolor=blue,colorlinks=true]{hyperref}
\let\originaleqref\eqref
\renewcommand{\eqref}{~\originaleqref}

\usepackage{caption}
\captionsetup{justification=centering}

\setlength{\topmargin}{-1.0cm}
\setlength{\textheight}{24cm}
\setlength{\textwidth}{15cm}
\setlength{\oddsidemargin}{8mm}
\setlength{\evensidemargin}{8mm}

%\usepackage[backend=bibtex8]{biblatex}
%\ExecuteBibliographyOptions{url=false,doi=false,isbn=false,eprint=false}
%\ExecuteBibliographyOptions{clearlang=true,firstinits=true,maxbibnames=99}
%\AtEveryBibitem{\ifentrytype{book}{\clearfield{pages}}{}}
%\renewbibmacro{in:}{}
%\addbibresource{QVI_ALM.bib}

\newtheorem{dfn}{Definition}[section]
\newtheorem{lem}[dfn]{Lemma}
\newtheorem{thm}[dfn]{Theorem}

\newtheorem{exm}[dfn]{Example}
%\theorembodyfont{\normalfont}
\theoremstyle{definition}

\newtheorem{rem}[dfn]{Remark}

\allowdisplaybreaks

\title{Convergence Results of Two-Step Inertial Proximal Point Algorithm }

\date{\today}

\author{Olaniyi S. Iyiola\footnote{Department of Mathematics, Clarkson University, Potsdam, NY, USA; e-mail:niyi4oau@gmail.com; oiyiola@clarkson.edu}
\hspace*{0.8mm}, Yekini Shehu\footnote{College of Mathematics and Computer Science, Zhejiang Normal University, Jinhua 321004, People’s Republic of China; e-mail: yekini.shehu@zjnu.edu.cn}}

\begin{document}

\maketitle

\begin{abstract}
\noindent
This paper proposes a two-point inertial proximal point algorithm to find zero of maximal monotone
operators in Hilbert spaces. We obtain weak convergence results and non-asymptotic $O(1/n)$ convergence rate of our proposed algorithm in non-ergodic sense. Applications of our results to various well-known convex optimization methods, such as the proximal method of multipliers and the alternating direction method of multipliers are given. Numerical results are given to demonstrate the accelerating behaviors of our method over other related methods in the literature.\\

\noindent  {\bf Keywords:}Proximal point algorithm; Two-point inertia; Maximal monotone operators; Weak and non-asymptotic convergence; Hilbert spaces.\\

\noindent {\bf 2010 MSC classification:} 90C25, 90C30, 90C60, 68Q25, 49M25, 90C22

\end{abstract}

\section{Introduction}\label{Sec:Intro}
\noindent Suppose $H$ is a real Hilbert space with inner product $\langle.,.\rangle$ and induced norm $\|\cdot\|.$  Given a maximal monotone set-valued operator, $A:H \to 2^{H},$  we consider the following inclusion problem:
\begin{equation}\label{ocn}
\mbox{find  } x   \in H \ \mbox{ such that } \ \textbf{0} \in A(x).
 \end{equation}
\noindent Throughout this paper, we shall denote by $A^{-1}(\textbf{0})$, the set of solutions to \eqref{ocn}. It is well known that the inclusion problem \eqref{ocn} serves as a unifying  model for many problems of fundamental importance, including
fixed point problem,  variational inequality problem,  minimization of closed proper convex functions, and
their variants and extensions. Therefore, its efficient solution is of practical interest  in many situations.\\

\noindent
\textbf{Related works.}  The proximal point algorithm (PPA), which was first  studied by Martinet and further developed by Rockafellar and others (see, for example, \cite{8a,26M,25,26,34}) has been used for many years for studying the inclusion problem \eqref{ocn}. Starting from an arbitrary point  $x_0 \in H,$  the PPA iteratively generates its sequence $\{x_n\}$  by
\begin{equation}\label{oct}
x_{n+1}=J_{\lambda}^A(x_n),
 \end{equation}
(where the operator
 $J_{\lambda}^A:=(I+\lambda A)^{-1}$ is the so-called resolvent operator, that has been
introduced by Moreau in \cite{25}) which is equivalent to
\begin{equation}\label{octa}
\textbf{0} \in \lambda A(x_{n+1})+x_{n+1}-x_n
\end{equation}
where $\lambda >0$  called proximal parameter. The PPA \eqref{oct} is a very powerful algorithmic tool and contains many well known algorithms as special cases, including the classical augmented Lagrangian method  \cite{20,29}, the Douglas-Rachford splitting method \cite{DouglasRachford,LionsMercier} and the alternating direction method of multipliers \cite{17,18}.  Interesting results on weak convergence and rate of convergence of PPA have been obtained in \cite{12,19,Min}. The equivalent representation of the PPA \eqref{octa}, can be written as
\begin{equation}\label{oct1}
\textbf{0} \in \frac{ x_{n+1}-x_n}{\lambda}+ A(x_{n+1}).
\end{equation}
This  can be viewed as an implicit discretization of the evolution differential inclusion problem
\begin{equation}\label{octa2}
\textbf{0} \in \frac{dx}{dt} + A(x(t))
\end{equation}
\noindent It has been shown that  the solution trajectory of \eqref{octa2} converges to a solution of \eqref{ocn}  provided that $T$  satisfies certain conditions  (see, for example, \cite{10}).\\

\noindent
To speed up convergence of PPA \eqref{oct}, the following second order evolution differential inclusion
problem was introduced  in the literature:
\begin{equation}\label{octa3}
\textbf{0} \in \frac{d^{2}x}{dt^2} + \beta\frac{dx}{dt} + A(x(t)),
\end{equation}
\noindent  where $\beta > 0$  is a friction parameter.  If $A = \nabla f,$  where $ f: \mathbb{R}^2 \to \mathbb{R}$  is a differentiable
convex function with attainable minimum, the system \eqref{octa3} characterizes roughly the motion of a heavy ball which
rolls under its own inertia over the graph of $f$ until friction stops it at a stationary point of $f.$  In this case, the three
terms in  \eqref{octa3} denote, respectively, inertial force, friction force and gravity force. Consequently, the system \eqref{octa3} is usually
referred to as the heavy-ball with friction (HBF) system (see \cite{Pol}). In theory,  the convergence of the solution trajectories of the
HBF system to a solution of  \eqref{ocn} can be faster than those of the first-order system  \eqref{octa2}, while in practice the second
order inertial term  $\frac{d^2 x}{dt^2}$   can be exploited to design faster algorithms (see, e.g., \cite{1,5}).
As a result of  the properties of \eqref{octa3}, an implicit discretization method was proposed in \cite{2,4} as follows, given $x_{n-1}$  and $x_n$, the next point  $x_{n+1}$  is
determined via
\begin{equation}\label{adedollar}
\textbf{0}  \in \frac{x_{n+1}-2x_n+x_{n-1}}{h^2} + \beta\frac{x_{n+1}-x_n}{h}+ A(x_{n+1}),
\end{equation}
which result to an iterative algorithm of the form
\begin{equation}\label{octa4}
x_{n+1} = J_{\lambda}^A(x_n+\theta(x_n-x_{n-1})),
\end{equation}
where $\lambda= \frac{h^2}{1+ \beta h}$  and  $\theta= \frac{1}{1+ \beta h}.$
In fact, if we replace $ x_{n+1}-2x_n+x_{n-1}$ in \eqref{adedollar} with
$ x_{n+1}-x_n+\rho(x_{n-1}-x_n), \rho \in [0,1]$, we obtain a general algorithm of \eqref{octa4} with
  $\lambda= \frac{h^2}{1+ \beta h}$  and  $\theta= \frac{\rho}{1+ \beta h}, \rho\in [0,1].$
Note also that \eqref{octa4} is the  proximal point step applied to the
extrapolated point  $x_n+\theta(x_n-x_{n-1})$  rather than  $x_n$  as in the classical PPA \eqref{oct}.  We call the iterative method in  \eqref{octa4} one-step inertial PPA. Convergence properties of  \eqref{octa4} have been studied in  \cite{2,3,4,21,22,26} under some assumptions on the parameters $\theta$ and $\lambda.$ The inertial PPA \eqref{octa4} has been adapted to studying inertial Douglas-Rachford splitting method \cite{6,CA,9,C,DouglasRachford,LionsMercier}, inertial alternating method of multipliers (ADMM) \cite{B,7,C,18,19} and demonstrated their performance numerically on some imaging and data analysis problems. In all the references mentioned above, the inertial PPA \eqref{octa4} (which is the PPA \eqref{oct} with the one-step inertial extrapolation) has been studied. This is a different approach we that we take in this paper, where we consider the PPA \eqref{oct} with the two-step inertial extrapolation. Our result is motivated by the results given in \cite{Kim}, where an accelerated proximal point algorithm (which involves both the one-step inertial term and correction term) for maximal monotone operator is studied. In contrast to the method of Kim \cite{Kim}, we replace iterate $y_{n-1}$ in the correction term of \cite{Kim} with $x_{n-1}$ to obtain two-step inertial extrapolation and investigate the convergence properties.\\

\noindent
 From another point of view, our proposed two-step inertial PPA can be regarded as a general parametrized proximal point algorithm. Recent and interesting results on parametrized proximal point algorithm can be found in \cite{Bai1,MaNi}, where parametrized proximal point algorithm is developed for solving a class of separable convex programming problems subject to linear and convex constraints.
In these papers \cite{Bai1,MaNi}, it was shown numerically that parametrized proximal point algorithm could perform significantly better for solving sparse optimization problems than ADMM and relaxed proximal point algorithm.\\

\noindent
\textbf{Advantages of two-step proximal point algorithms.}
In \cite{Poon1,Poon2}, Poon and Liang discussed some limitations of inertial Douglas-Rachford splitting method and inertial ADMM. For example, consider the following feasibility problem in $\mathbb{R}^2$.

\begin{exm}\label{voice}
Let $T_1, T_2 \subset \mathbb{R}^2$ be two subspaces such that $T_1 \cap T_2 \neq \emptyset$. Find $x \in \mathbb{R}^2$ such that $x \in
T_1 \cap T_2$.
\end{exm}
\noindent
It was shown in \cite[Section 4]{Poon2} that two-step inertial Douglas-Rachford splitting method, where
$$x_{n+1}=F_{DR}(x_n+\theta(x_n-x_{n-1})+\delta(x_{n-1}-x_{n-2}))$$
converges faster than one-step inertial Douglas-Rachford splitting method
$$x_{n+1}=F_{DR}(x_n+\theta(x_n-x_{n-1}))$$
for Example \ref{voice}. In fact, it was shown using this Example \ref{voice} that one-step inertial Douglas-Rachford splitting method
$$x_{n+1}=F_{DR}(x_n+\theta(x_n-x_{n-1}))$$
converges slower than the Douglas-Rachford splitting method $$x_{n+1}=F_{DR}(x_n),$$
where
$$F_{DR}:=\frac{1}{2}\Big(I+(2P_{T_1}-I)(2P_{T_2}-I)\Big)$$
is the Douglas-Rachford splitting operator. This example therefore shows that one-step inertial Douglas-Rachford splitting method may fail to provide acceleration. Therefore, for certain cases, the use of inertia of more than two points could be beneficial. It was remark in \cite[Chapter 4]{Liang} that the use of more than two points $x_n, x_{n-1}$ could provide acceleration. For example,  the following two-step inertial extrapolation
\begin{equation}\label{afr}
y_n=x_n+\theta(x_n-x_{n-1})+\delta(x_{n-1}-x_{n-2})
\end{equation}
with $\theta > 0$ and $\delta< 0$ can provide acceleration. The failure of one-step inertial acceleration of ADMM was also discussed in \cite[Section 3]{Poon1} and adaptive acceleration for ADMM was proposed instead. Polyak \cite{Polyakbook} also discussed that the multi-step inertial methods can boost the speed of optimization methods even though neither the convergence nor the rate result of such multi-step inertial methods is established in \cite{Polyakbook}. Some results on multi-step inertial methods have recently been studied in \cite{CombettesGlaudin,DongJOGO}. \\

\noindent
\textbf{Our contribution.}  In this paper, we propose an inertial proximal point algorithm with two-step inertial extrapolation step. We obtain weak convergence results and give non-asymptotic $O(1/n)$ convergence rate of our proposed algorithm in non-ergodic sense. The summability conditions of the inertial parameters and the sequence of iterates imposed in \cite[Algorithm 1.2]{CombettesGlaudin}, \cite[Theorem 4.2 (35)]{DongJOGO}, and \cite[Chapter 4, (4.2.5)]{Liang} are dispensed with in our results. We apply our results  to the proximal method of multipliers and the alternating direction method of multipliers. We support our theoretical analysis with some preliminary computational experiments, which confirm the superiority of our method over other related ones in the literature.\\

\noindent
\textbf{Outline.}
In Section \ref{Sec:Prelims}, we give some basic definitions and results needed in subsequent sections. In Section \ref{Sec:main}, we derive our method from the dynamical systems and later introduce our proposed method. We also give both weak convergence and non-asymptotic $O(1/n)$ convergence rate of our method in Section \ref{Sec:main}. We give applications of our results to convex-concave saddle-point problems, the proximal method of multipliers, ADMM, primal–dual hybrid gradient method and Douglas–Rachford splitting method in Section \ref{Sec:Applications}.
We give some numerical illustrations in Section \ref{sec:numerics} and concluding remarks are given in Section \ref{conclude}.

\section{Preliminaries}\label{Sec:Prelims}
\noindent
In this section, we give some definitions and basic results that will
be used in our subsequent analysis. The weak and the strong convergence of $\{x_n\}\subset H$ to $x \in H$
is denoted by $x_n \rightharpoonup x$ and $x_n \to x$ as $n \to \infty $ respectively.

\begin{dfn}
A mapping $T: H \to H$ is called
\begin{itemize}
	\item[(i)] nonexpansive if $\|Tx - Ty \| \leq \|x-y \|,$ for all $x,y \in H;$
	\item[(ii)] firmly nonexpansive if $\|Tx - Ty\|^2 \leq \|x- y \|^2 - \|(I-T)x - (I - T)y \|^2$ for all $x,y \in H.$ Equivalently, $T$ is firmly nonexpansive if $\|Tx - Ty\|^2 \leq \langle x - y, Tx - Ty \rangle$ for all $x,y \in H;$
	\item[(iii)] averaged if $T$ can be expressed as the averaged of the identity mapping $I$ and a nonexpansive mapping $S$, i.e., $T = (1 - \alpha)I + \alpha S$ with $\alpha  \in (0,1).$ Alternatively, $T$ is $\alpha$-averaged if
$$
\|Tx-Ty\|^2 \leq \|x-y\|^2-\frac{1-\alpha}{\alpha}\|(I-T)x-(I-T)y\|^2, \forall x,y \in H.
$$
\end{itemize}
\end{dfn}

\begin{dfn}
A multivalued mapping $A:H \to 2^H$ is said to be monotone if for any $x,y \in H,$
\begin{equation}
\langle x - y, f - g \rangle \geq 0, \nonumber
\end{equation}
where $f \in Ax $ and $g \in Ay.$ The Graph of $A$ is defined by $$Gr(A):= \{(x,f)\in H \times H : f \in Ax\}.$$ If $Gr(A)$ is not properly contained in the graph of any other monotone mapping, then we say that $A$ is maximal.  It is well-known that for each $x \in H$, and
$\lambda>0$, there is a unique $z \in H$ such that $x \in (I+\lambda A)z$. The single-valued operator $J_{\lambda}^A(x)$ is called the resolvent of $A$ (see \cite{12}).
\end{dfn}

\begin{lem}\label{lm2}
The following identities hold for all $u,v,w\in H$:
$$2\langle u,v\rangle=\|u\|^{2}+\|v\|^{2}-\|u-v\|^{2}=\|u+v\|^{2}-\|u\|^{2}-\|v\|^{2}.$$
\end{lem}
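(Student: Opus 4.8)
The plan is to obtain both equalities by directly expanding the squared norms through the identity $\|w\|^{2}=\langle w,w\rangle$ together with the bilinearity and symmetry of the inner product on $H$. Note that the variable $w$ in the quantifier plays no role in the asserted identity, so it suffices to treat arbitrary $u,v\in H$.

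First I would expand
\[
\|u-v\|^{2}=\langle u-v,\,u-v\rangle=\|u\|^{2}-2\langle u,v\rangle+\|v\|^{2},
\]
using symmetry to collapse $\langle u,v\rangle$ and $\langle v,u\rangle$ into $2\langle u,v\rangle$. Rearranging this equation for the cross term yields the first claimed equality $2\langle u,v\rangle=\|u\|^{2}+\|v\|^{2}-\|u-v\|^{2}$.

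Next I would perform the analogous expansion for the sum,
\[
\|u+v\|^{2}=\langle u+v,\,u+v\rangle=\|u\|^{2}+2\langle u,v\rangle+\|v\|^{2},
\]
and solve for the cross term to obtain $2\langle u,v\rangle=\|u+v\|^{2}-\|u\|^{2}-\|v\|^{2}$, which is the second equality. Since both expressions equal $2\langle u,v\rangle$, the full chain of identities follows. There is essentially no obstacle here: the result is a standard polarization-type identity and the only care needed is correct sign bookkeeping in the two expansions, so I would keep the computation explicit but brief.
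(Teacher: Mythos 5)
Your proposal is correct: expanding $\|u\pm v\|^2=\langle u\pm v,u\pm v\rangle$ by bilinearity and symmetry and solving for the cross term is exactly the standard argument, and the paper itself states this lemma without proof as a well-known identity. Your observation that $w$ plays no role in the statement is also accurate.
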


\begin{lem} \label{simple}
Let $x,y,z \in H$ and $a,b \in \mathbb{R}$. Then
\begin{eqnarray*}
\|(1+a)x-(a-b)y-bz\|^2&=& (1+a)\|x\|^2-(a-b)\|y\|^2-b\|z\|^2 +(1+a)(a-b)\|x-y\|^2\\
&&+b(1+a)\|x-z\|^2-b(a-b)\|y-z\|^2.
\end{eqnarray*}
\end{lem}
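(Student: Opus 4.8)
The plan is to recognize that this is a polarization-type identity for an affine combination of three vectors. The crucial structural observation is that the three coefficients appearing on the left, namely $1+a$, $-(a-b)$, and $-b$, sum to exactly $1$, since $(1+a)-(a-b)-b=1$. Writing $\alpha:=1+a$, $\beta:=-(a-b)$, and $\gamma:=-b$, the claim is precisely the special case of the general identity
$$\|\alpha x+\beta y+\gamma z\|^2=\alpha\|x\|^2+\beta\|y\|^2+\gamma\|z\|^2-\alpha\beta\|x-y\|^2-\alpha\gamma\|x-z\|^2-\beta\gamma\|y-z\|^2,$$
valid whenever $\alpha+\beta+\gamma=1$. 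So I would first prove this general identity and then substitute the specific values, checking that the signs reproduce exactly the six terms in the statement.

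To prove the general identity, first I would expand the left-hand side by bilinearity of the inner product, obtaining the three pure squares $\alpha^2\|x\|^2+\beta^2\|y\|^2+\gamma^2\|z\|^2$ together with the three cross terms $2\alpha\beta\langle x,y\rangle+2\alpha\gamma\langle x,z\rangle+2\beta\gamma\langle y,z\rangle$. Then I would rewrite each cross term using the first identity of Lemma \ref{lm2}, e.g. $2\langle x,y\rangle=\|x\|^2+\|y\|^2-\|x-y\|^2$, which turns every inner product into squared norms and squared differences. Collecting the coefficient of $\|x\|^2$ gives $\alpha^2+\alpha\beta+\alpha\gamma=\alpha(\alpha+\beta+\gamma)$, and here the hypothesis $\alpha+\beta+\gamma=1$ collapses this to $\alpha$; the coefficients of $\|y\|^2$ and $\|z\|^2$ simplify to $\beta$ and $\gamma$ in the same way. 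The difference terms $-\alpha\beta\|x-y\|^2$, $-\alpha\gamma\|x-z\|^2$, $-\beta\gamma\|y-z\|^2$ are untouched, and this is exactly the general identity.

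It then remains only to substitute $\alpha=1+a$, $\beta=-(a-b)$, $\gamma=-b$. The pure-square terms give $(1+a)\|x\|^2-(a-b)\|y\|^2-b\|z\|^2$ directly. For the cross terms, $-\alpha\beta=(1+a)(a-b)$ and $-\alpha\gamma=b(1+a)$, while the last coefficient $-\beta\gamma=-(a-b)b=-b(a-b)$ requires carefully tracking the product of two negatives, which is the one place a sign slip could occur.

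I expect no genuine obstacle here: the result is a routine algebraic identity. The only care needed is the bookkeeping of signs, especially in the $-\beta\gamma$ term, and the single conceptual step worth highlighting is the use of $\alpha+\beta+\gamma=1$ to reduce the quadratic coefficients of the pure squares to the linear coefficients in the statement. Alternatively, one could bypass the general identity and expand both sides of the claimed equality directly in terms of inner products and compare, but routing through the affine-combination identity keeps the computation transparent and minimizes the risk of error.
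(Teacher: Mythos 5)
Your proof is correct. The paper itself states Lemma \ref{simple} without any proof, treating it as a known elementary identity, so there is no argument in the paper to compare against; your derivation --- reducing to the general identity $\|\alpha x+\beta y+\gamma z\|^2=\alpha\|x\|^2+\beta\|y\|^2+\gamma\|z\|^2-\alpha\beta\|x-y\|^2-\alpha\gamma\|x-z\|^2-\beta\gamma\|y-z\|^2$ for $\alpha+\beta+\gamma=1$, proved by expanding and converting cross terms via Lemma \ref{lm2}, then substituting $\alpha=1+a$, $\beta=-(a-b)$, $\gamma=-b$ --- is the standard way to establish it, and your sign bookkeeping (in particular $-\beta\gamma=-b(a-b)$) checks out.
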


\section{Main Results}\label{Sec:main}

\subsection{Motivations from Dynamical Systems}

\noindent Consider the following second order dynamical system
\begin{equation}\label{situ}
 \ddot{x}(t)+\alpha(t)\dot{x}(t)+\beta(t)(x(t)-J_{\lambda}^A(x(t)))=0,~~x(0)=x_0, \dot{x}(0)=v_0,
\end{equation}
where $\alpha,\beta:[0,\infty) \rightarrow [0,\infty)$ are Lebesgue measurable functions and $\lambda >0$. Let $0<\omega_2<\omega_1$ be two weighting parameters such that
$\omega_1+\omega_2=1$, $h>0$ is the time step-size, $t_n=nh$ and $x_n=x(t_n)$. Consider an explicit Euler forward discretization with respect to $J_{\lambda}^A$, explicit discretization of $\dot{x}(t)$, and a weighted sum of explicit and implicit discretization of $\ddot{x}(t)$, we have
\begin{eqnarray}\label{situ1}
&&\frac{\omega_1}{h^2}(x_{n+1}-2x_n+x_{n-1})+\frac{\omega_2}{h^2}(x_n-2x_{n-1}+x_{n-2})\nonumber \\
&&+\frac{\alpha_n}{h}(x_n-x_{n-1})+\beta_n(y_n-J_{\lambda}^A(y_n))=0,
\end{eqnarray}
where $y_n$ performs "extrapolation" onto the points $x_n, x_{n-1}$ and $x_{n-2}$, which will be chosen later. We
observe that since $T:=I-J_{\lambda}^A$ is Lipschitz continuous, there is some flexibility
in this choice. Therefore, \eqref{situ1} becomes
\begin{eqnarray}\label{situ2}
  &&x_{n+1}-2x_n+x_{n-1}+\frac{\omega_2}{\omega_1}(x_n-2x_{n-1}+x_{n-2})\nonumber \\
  &&+ \frac{\alpha_n h}{\omega_1}(x_n-x_{n-1})+\frac{\beta_nh^2}{\omega_1}(y_n-J_{\lambda}^A(y_n))=0.
\end{eqnarray}
This implies that
\begin{eqnarray}\label{situ3}
  &&x_{n+1}=2x_n-x_{n-1}-\frac{\omega_2}{\omega_1}(x_n-2x_{n-1}+x_{n-2}) \nonumber \\
 &&- \frac{\alpha_n h}{\omega_1}(x_n-x_{n-1})-\frac{\beta_nh^2}{\omega_1}(y_n-J_{\lambda}^A(y_n))\nonumber \\
&=&x_n+ (x_n-x_{n-1})-\frac{\omega_2}{\omega_1}(x_n-x_{n-1})+\frac{\omega_2}{\omega_1}(x_{n-1}-x_{n-2}) \nonumber \\
&&- \frac{\alpha_n h}{\omega_1}(x_n-x_{n-1})-\frac{\beta_nh^2}{\omega_1}(y_n-J_{\lambda}^A(y_n))\nonumber \\
&=& x_n+ \Big(1-\frac{\omega_2}{\omega_1}-\frac{\alpha_n h}{\omega_1}\Big)(x_n-x_{n-1})+ \frac{\omega_2}{\omega_1}(x_{n-1}-x_{n-2}) \nonumber \\
&&-\frac{\beta_nh^2}{\omega_1}(y_n-J_{\lambda}^A(y_n)).
\end{eqnarray}
Set
$$
\theta_n:=1-\frac{\omega_2}{\omega_1}-\frac{\alpha_n h}{\omega_1},~~\delta_n:=\frac{\omega_2}{\omega_1},~~ \rho_n:=\frac{\beta_nh^2}{\omega_1}.
$$
\noindent Then we have from \eqref{situ3} that
\begin{eqnarray}\label{situ4}
x_{n+1}&=&x_n+\theta_n(x_n-x_{n-1})+\delta_n(x_{n-1}-x_{n-2})\nonumber \\
  &&-\rho_ny_n+\rho_nJ_{\lambda}^A(y_n).
\end{eqnarray}
Choosing $y_n=x_n+\theta_n(x_n-x_{n-1})+\delta_n(x_{n-1}-x_{n-2})$, then \eqref{situ4} becomes
\begin{eqnarray}\label{situ5}
\left\{  \begin{array}{ll}
      & y_n=x_n+\theta_n(x_n-x_{n-1})+\delta_n(x_{n-1}-x_{n-2}),\\
      & x_{n+1}=(1-\rho_n)y_n+\rho_nJ_{\lambda}^A(y_n)
      \end{array}
      \right.
\end{eqnarray}
This is two-step inertial proximal point algorithm we intend to study in the next section of this paper.

\subsection{Proposed Method}\label{Sec:Method}
\noindent In this subsection, we consider two-step inertial proximal point algorithm given in \eqref{situ5} with $\theta_n=\theta,~~\delta_n=\delta$ and $\rho_n=1$ for the sake of simplicity.\\

\noindent
 In our convergence analysis, we assume that parameters $\delta$ and $\theta$ lie in the following region:

\begin{equation}\label{osi}
\mathcal{G}:=\Big\{(\delta,\theta): 0\leq \theta < \frac{1}{3}, \frac{3\theta-1}{3+4\theta} < \delta\leq 0\Big\}.
\end{equation}
\noindent One can see clearly from \eqref{osi} that $\delta < 1-3\theta$. \\

%\begin{equation}\label{osi}
%\mathcal{G}:=\Big\{(\delta,\theta): 0\leq \theta \leq \frac{1}{2}, \frac{3\theta-1}{3+4\theta} \leq \delta\leq \min \{1-3\theta,0\}\Big\}
%\Leftrightarrow
%\delta \leq
%\left\{\begin{array}[c]{ll}
 %    1-3\theta, & \text{if } \frac{1}{3}\leq \theta \leq \frac{1}{2}, \\
  %     0,         &  \text{if }  0\leq \theta \leq \frac{1}{3}.
   %   \end{array}
   %   \right.
%\end{equation}

\noindent We now present our proposed method as follows:

\begin{algorithm}[H]
\caption{(2-Step Inertial PPA)}\label{alg1}
\begin{algorithmic}[1]
\State  Choose parameters $\delta$ and $\theta$ satisfying condition \eqref{osi}. Choose $x_{-1}, x_0, y_0 \in H$ arbitrarily, $\lambda >0$ and set $n=0.$
\State  Given $x_{n-1}, x_n$ and $y_n$, compute $x_{n+1}$ as follows:
        \begin{eqnarray}\label{3.1a}
\left\{  \begin{array}{ll}
      & x_{n+1}=J_{\lambda}^A(y_n),\\
      & y_{n+1}=x_{n+1}+\theta(x_{n+1}-x_n)+\delta(x_n-x_{n-1})
      \end{array}
      \right.
      \end{eqnarray}
\State Set $ n \leftarrow n+1 $, and {\bf go to Step 2}.
\end{algorithmic}
\end{algorithm}

\begin{rem}
When $\delta=0$ in our proposed Algorithm \ref{alg1}, our method reduces to the inertial proximal point algorithm studied in
\cite{4,3,AttouchCabot,6,Aujol,9,Chen,21,22} to mention but a few. Our method is an extension of the inertial proximal point algorithm in
\cite{4,3,AttouchCabot,6,Aujol,9,Chen,21,22}. We will show the advantage gained with the introduction of $\delta \in (-\infty,0]$ in the numerical experiments in Section \ref{sec:numerics}.
\end{rem}

\subsection{Convergence Analysis}
\noindent We present the weak convergence analysis of sequence of iterates generated by our proposed Algorithm \ref{alg1} in this subsection.

\begin{thm}\label{thm1}
	Let $A:H \to 2^H$ be a maximal monotone. Suppose $A^{-1}(\textbf{0}) \neq \emptyset$ and let $\{x_n\}$ be generated by Algorithm \ref{alg1}. Then $\{x_n\}$ converges weakly to a point in $A^{-1}(\textbf{0}).$
\end{thm}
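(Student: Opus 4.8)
The plan is to establish weak convergence via the standard Opial-type argument for Fejér-monotone-like sequences, but the key obstacle is that with a negative correction term $\delta$ the natural energy functional is not simply $\|x_n - p\|^2$; instead I expect to need a Lyapunov functional that combines the distance to a solution with a quadratic form in the consecutive differences $x_n - x_{n-1}$ and $x_{n-1} - x_{n-2}$. First I would fix an arbitrary $p \in A^{-1}(\mathbf{0})$, so that $p = J_\lambda^A(p)$, and use the firm nonexpansiveness of $J_\lambda^A$ (Definition part (ii)) applied to $x_{n+1} = J_\lambda^A(y_n)$ and $p = J_\lambda^A(p)$ to obtain the fundamental inequality $\|x_{n+1} - p\|^2 \leq \|y_n - p\|^2 - \|y_n - x_{n+1}\|^2$. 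This is the engine of the whole proof.

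Next I would expand $\|y_n - p\|^2$ using the definition $y_n = x_n + \theta(x_n - x_{n-1}) + \delta(x_{n-1} - x_{n-2})$. Writing $y_n - p = (1+\theta)(x_n - p) - (\theta - \delta)(x_{n-1} - p) - \delta(x_{n-2} - p)$, I would apply Lemma \ref{simple} with the substitutions $a = \theta$, $b = \delta$ and the three points $x_n - p, x_{n-1} - p, x_{n-2} - p$. This produces an exact expansion of $\|y_n - p\|^2$ in terms of $\|x_n - p\|^2$, $\|x_{n-1} - p\|^2$, $\|x_{n-2} - p\|^2$ together with cross terms $\|x_n - x_{n-1}\|^2$, $\|x_n - x_{n-2}\|^2$, and $\|x_{n-1} - x_{n-2}\|^2$. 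The goal is to rearrange the resulting inequality into a telescoping form
\begin{equation*}
\Phi_{n+1} \leq \Phi_n - (\text{nonnegative terms}),
\end{equation*}
where $\Phi_n := \|x_n - p\|^2 - \delta\|x_{n-1} - p\|^2 + \text{(a quadratic in the differences)}$. The role of the region $\mathcal{G}$ in \eqref{osi} is precisely to guarantee that $\Phi_n$ is bounded below (so that the coefficients, once collected, keep the functional nonnegative up to the distance term) and that the discarded residual terms are genuinely nonnegative.

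The hardest part, and where the constraints $0 \leq \theta < 1/3$ and $\frac{3\theta-1}{3+4\theta} < \delta \leq 0$ will be used, is verifying that the coefficients emerging from the Lemma \ref{simple} expansion have the correct signs to make $\Phi_n$ a valid Lyapunov functional. I would treat this as an algebraic sign-analysis: collect the coefficient of each difference term, impose nonnegativity, and check that the stated region is exactly what makes the bookkeeping close, yielding both $\lim_n \Phi_n$ exists and $\sum_n \|x_n - x_{n-1}\|^2 < \infty$, hence $\|x_n - x_{n-1}\| \to 0$ and consequently $\|y_n - x_{n+1}\| \to 0$. From $\Phi_n$ convergent plus $\|x_n - x_{n-1}\| \to 0$ one deduces that $\lim_n \|x_n - p\|$ exists for every solution $p$.

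Finally I would finish with the Opial argument. Boundedness of $\{x_n\}$ gives a weak cluster point $\bar{x}$ along some subsequence $x_{n_k} \rightharpoonup \bar{x}$; since $\|x_n - x_{n-1}\| \to 0$ we also have $y_{n_k} \rightharpoonup \bar{x}$, and from $x_{n+1} = J_\lambda^A(y_n)$ together with $\|y_n - x_{n+1}\| \to 0$ one identifies $(y_{n_k}, \frac{y_{n_k} - x_{n_k+1}}{\lambda})$ as a sequence in the graph of $A$ converging appropriately; the demiclosedness of the maximal monotone graph (equivalently, that $I - J_\lambda^A$ is demiclosed at $0$) then forces $\mathbf{0} \in A(\bar{x})$, so $\bar{x} \in A^{-1}(\mathbf{0})$. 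Since $\lim_n \|x_n - p\|$ exists for every $p \in A^{-1}(\mathbf{0})$, the standard Opial lemma rules out two distinct weak cluster points and yields $x_n \rightharpoonup \bar{x}$.
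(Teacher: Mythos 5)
Your proposal follows essentially the same route as the paper's proof: firm nonexpansiveness of $J_{\lambda}^A$ applied to $x_{n+1}=J_{\lambda}^A(y_n)$, the exact expansion of $\|y_n-x^*\|^2$ via Lemma \ref{simple} with $a=\theta$, $b=\delta$, a Lyapunov functional mixing the weighted distances $\|x_n-x^*\|^2-\theta\|x_{n-1}-x^*\|^2-\delta\|x_{n-2}-x^*\|^2$ with quadratic difference terms whose coefficient signs are governed exactly by the region $\mathcal{G}$, then demiclosedness of the graph of $A$ and an Opial-type conclusion. The only (harmless) deviation is at the end: you reduce to the classical Opial lemma by first extracting that $\lim_n\|x_n-p\|$ exists from the weighted limit and $\|x_n-x_{n-1}\|\to 0$, whereas the paper runs the uniqueness-of-weak-cluster-point argument directly on the weighted quantity via polarization identities; both are valid.
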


\begin{proof}
Let $x^* \in A^{-1}(\textbf{0})$. Then
\begin{eqnarray*}
y_n-x^*&=&x_n+\theta(x_n-x_{n-1})+\delta(x_{n-1}-x_{n-2})-x^*\\
&=&(1+\theta)(x_n-x^*)-(\theta-\delta)(x_{n-1}-x^*)-\delta(x_{n-2}-x^*)
\end{eqnarray*}
Therefore, by Lemma \ref{simple}, we obtain
\begin{eqnarray}\label{ade1}
\|y_n-x^*\|^2 &=& \|(1+\theta)(x_n-x^*)-(\theta-\delta)(x_{n-1}-x^*)-\delta(x_{n-2}-x^*)\|^2 \nonumber \\
&=&(1+\theta)\|x_n-x^*\|^2-(\theta-\delta)\|x_{n-1}-x^*\|^2-\delta\|x_{n-2}-x^*\|^2\nonumber \\
&&+(1+\theta)(\theta-\delta)\|x_n-x_{n-1}\|^2+\delta(1+\theta)\|x_n-x_{n-2}\|^2\nonumber \\
&&-\delta(\theta-\delta)\|x_{n-1}-x_{n-2}\|^2.
\end{eqnarray}
 By Cauchy-Schwartz inequality, we obtain
\begin{equation}\label{happy1}
-2\theta \langle x_{n+1}-x_n, x_n-x_{n-1}\rangle \geq -2\theta \|x_{n+1}-x_n\|\|x_n-x_{n-1}\|,
\end{equation}
\begin{equation}\label{happy2}
-2\delta \langle x_{n+1}-x_n,x_{n-1}-x_{n-2} \rangle  \geq -2|\delta| \|x_{n+1}-x_n\|\|x_{n-1}-x_{n-2}\|,
\end{equation}
and
\begin{eqnarray}\label{happy3}
2\delta\theta \langle x_n-x_{n-1},x_{n-1}-x_{n-2}\rangle &=& -2 \delta\theta \langle x_{n-1}-x_n,x_{n-1}-x_{n-2}\rangle  \nonumber \\
&\geq& -2|\delta|\theta \|x_n-x_{n-1}\|\|x_{n-1}-x_{n-2}\|.
\end{eqnarray}
By \eqref{happy1}, \eqref{happy2} and \eqref{happy3}, we obtain
\begin{eqnarray}\label{ade2}
\|x_{n+1}-y_n\|^2 &=& \|x_{n+1}-(x_n+\theta(x_n-x_{n-1})+\delta(x_{n-1}-x_{n-2}))\|^2 \nonumber\\
&=&\|x_{n+1}-x_n-\theta(x_n-x_{n-1})-\delta(x_{n-1}-x_{n-2})\|^2 \nonumber\\
&=&\|x_{n+1}-x_n\|^2-2\theta \langle x_{n+1}-x_n, x_n-x_{n-1}\rangle \nonumber\\
&&-2\delta \langle x_{n+1}-x_n,x_{n-1}-x_{n-2} \rangle+\theta^2\|x_n-x_{n-1}\|^2\nonumber\\
&&+2\delta\theta \langle x_n-x_{n-1},x_{n-1}-x_{n-2}\rangle+\delta^2\|x_{n-1}-x_{n-2}\|^2\nonumber\\
&\geq& \|x_{n+1}-x_n\|^2-2\theta \|x_{n+1}-x_n\|\|x_n-x_{n-1}\|\nonumber\\
&&-2|\delta| \|x_{n+1}-x_n\|\|x_{n-1}-x_{n-2}\|+\theta^2\|x_n-x_{n-1}\|^2\nonumber\\
&&-2|\delta|\theta \|x_n-x_{n-1}\|\|x_{n-1}-x_{n-2}\|+\delta^2\|x_{n-1}-x_{n-2}\|^2\nonumber\\
&\geq& \|x_{n+1}-x_n\|^2-\theta \|x_{n+1}-x_n\|^2-\theta\|x_n-x_{n-1}\|^2\nonumber\\
&&-|\delta| \|x_{n+1}-x_n\|^2-|\delta|\|x_{n-1}-x_{n-2}\|^2+\theta^2\|x_n-x_{n-1}\|^2\nonumber\\
&&-|\delta|\theta \|x_n-x_{n-1}\|^2-|\delta|\theta\|x_{n-1}-x_{n-2}\|^2+\delta^2\|x_{n-1}-x_{n-2}\|^2\nonumber\\
&=&(1-|\delta|-\theta)\|x_{n+1}-x_n\|^2+(\theta^2-\theta-|\delta|\theta)\|x_n-x_{n-1}\|^2\nonumber\\
&&+(\delta^2-|\delta|-|\delta|\theta)\|x_{n-1}-x_{n-2}\|^2.
\end{eqnarray}
Using \eqref{ade1} and \eqref{ade2} in \eqref{3.1a}, we have (noting that $J_{\lambda}^A$ is firmly nonexpansive and $\delta \leq 0$)
\begin{eqnarray}\label{ade3}
\|x_{n+1}-x^*\|^2&=&\|J_{\lambda}^A(y_n)-x^*\|^2 \nonumber \\
&=&  \|J_{\lambda}^A(y_n)-J_{\lambda}^A(x^*)\|^2 \nonumber \\
&\leq&  \|y_n-x^*\|^2-\|J_{\lambda}^A(y_n)-y_n\|^2 \nonumber \\
&=& \|y_n-x^*\|^2 -\|x_{n+1}-y_n\|^2 \nonumber\\
&\leq&(1+\theta)\|x_n-x^*\|^2-(\theta-\delta)\|x_{n-1}-x^*\|^2-\delta\|x_{n-2}-x^*\|^2\nonumber \\
&&+(1+\theta)(\theta-\delta)\|x_n-x_{n-1}\|^2+\delta(1+\theta)\|x_n-x_{n-2}\|^2\nonumber \\
&&-\delta(\theta-\delta)\|x_{n-1}-x_{n-2}\|^2-(1-|\delta|-\theta)\|x_{n+1}-x_n\|^2\nonumber \\
&&-(\theta^2-\theta-|\delta|\theta)\|x_n-x_{n-1}\|^2-(\delta^2-|\delta|-|\delta|\theta)\|x_{n-1}-x_{n-2}\|^2\nonumber \\
&=& (1+\theta)\|x_n-x^*\|^2-(\theta-\delta)\|x_{n-1}-x^*\|^2-\delta\|x_{n-2}-x^*\|^2\nonumber \\
&&+\Big((1+\theta)(\theta-\delta)-(\theta^2-\theta-|\delta|\theta)\Big)\|x_n-x_{n-1}\|^2\nonumber \\
&&+\delta(1+\theta)\|x_n-x_{n-2}\|^2-(1-|\delta|-\theta)\|x_{n+1}-x_n\|^2\nonumber \\
&&-\Big(\delta(\theta-\delta)+(\delta^2-|\delta|-|\delta|\theta)\Big)\|x_{n-1}-x_{n-2}\|^2\nonumber \\
&=& (1+\theta)\|x_n-x^*\|^2-(\theta-\delta)\|x_{n-1}-x^*\|^2-\delta\|x_{n-2}-x^*\|^2\nonumber \\
&&+(2\theta-\delta-\delta\theta+|\delta|\theta)\|x_n-x_{n-1}\|^2+\delta(1+\theta)\|x_n-x_{n-2}\|^2\nonumber \\
&&-(1-|\delta|-\theta)\|x_{n+1}-x_n\|^2+(|\delta|+|\delta|\theta-\delta\theta) \|x_{n-1}-x_{n-2}\|^2\nonumber \\
&\leq&
(1+\theta)\|x_n-x^*\|^2-(\theta-\delta)\|x_{n-1}-x^*\|^2-\delta\|x_{n-2}-x^*\|^2\nonumber \\
&&+(2\theta-\delta-\delta\theta+|\delta|\theta)\|x_n-x_{n-1}\|^2-(1-|\delta|-\theta)\|x_{n+1}-x_n\|^2\nonumber \\
&&+(|\delta|+|\delta|\theta-\delta\theta) \|x_{n-1}-x_{n-2}\|^2.
\end{eqnarray}
Therefore,
\begin{eqnarray}\label{ade4}
&& \|x_{n+1}-x^*\|^2-\theta\|x_n-x^*\|^2-\delta\|x_{n-1}-x^*\|^2+(1-|\delta|-\theta)\|x_{n+1}-x_n\|^2\nonumber \\
&\leq& \|x_n-x^*\|^2-\theta\|x_{n-1}-x^*\|^2-\delta\|x_{n-2}-x^*\|^2 \nonumber \\
&&+(1-|\delta|-\theta)\|x_n-x_{n-1}\|^2+(3\theta-1+(1+\theta)(|\delta|-\delta))\|x_n-x_{n-1}\|^2 \nonumber \\
&&+ (|\delta|+|\delta|\theta-\delta\theta)\|x_{n-1}-x_{n-2}\|^2.
\end{eqnarray}
Now, define
$$
\Gamma_{n}:=\|x_n-x^*\|^2-\theta\|x_{n-1}-x^*\|^2-\delta\|x_{n-2}-x^*\|^2+(1-|\delta|-\theta)\|x_n-x_{n-1}\|^2, ~~n\geq 1.
$$
\noindent
We show that $\Gamma_n \geq 0, ~~\forall n \geq 1.$ Observe that
\begin{eqnarray}\label{ade12}
\|x_{n-1}-x^*\|^2\leq 2\|x_n-x_{n-1}\|^2 +2\|x_n-x^*\|^2.
\end{eqnarray}
So,
\begin{eqnarray}\label{ade13}
\Gamma_n&=&  \|x_n-x^*\|^2-\theta\|x_{n-1}-x^*\|^2-\delta\|x_{n-2}-x^*\|^2\nonumber \\
&&+(1-|\delta|-\theta)\|x_n-x_{n-1}\|^2\nonumber \\
&\geq & \|x_n-x^*\|^2-2\theta\|x_n-x_{n-1}\|^2-2\theta\|x_n-x^*\|^2\nonumber \\
&&-\delta\|x_{n-2}-x^*\|^2+(1-|\delta|-\theta)\|x_n-x_{n-1}\|^2\nonumber \\
&=& (1-2\theta)\|x_n-x^*\|^2+(1-|\delta|-3\theta)\|x_n-x_{n-1}\|^2\nonumber \\
&&-\delta\|x_{n-2}-x^*\|^2\nonumber \\
&\geq&0,
\end{eqnarray}
since $|\delta|< 1-3\theta\Leftrightarrow 3\theta-1 <\delta< 1-3\theta$ and $\delta \leq 0$. Furthermore, we obtain from \eqref{ade4} that
\begin{eqnarray}\label{ade5}
\Gamma_{n+1}-\Gamma_{n}& \leq & -(3\theta-1+(1+\theta)(|\delta|-\delta))(\|x_{n-1}-x_{n-2}\|^2-\|x_n-x_{n-1}\|^2) \nonumber \\
&&-\Big[-(3\theta-1+(1+\theta)(|\delta|-\delta))-(|\delta|+|\delta|\theta-\delta\theta) \Big]\|x_{n-1}-x_{n-2}\|^2 \nonumber \\
& = & -(3\theta-1+(1+\theta)(|\delta|-\delta))(\|x_{n-1}-x_{n-2}\|^2-\|x_n-x_{n-1}\|^2) \nonumber \\
&&-\Big(1-3\theta-2|\delta|-2\theta|\delta|+2\theta\delta+\delta\Big)\|x_{n-1}-x_{n-2}\|^2\nonumber \\
& = &  c_1(\|x_{n-1}-x_{n-2}\|^2-\|x_n-x_{n-1}\|^2)-c_2\|x_{n-1}-x_{n-2}\|^2,
\end{eqnarray}
 where $ c_1:=-(3\theta-1+(1+\theta)(|\delta|-\delta))$ and  $c_2:=1-3\theta-2|\delta|-2\theta|\delta|+2\theta\delta+\delta.$
Noting that $|\delta|=-\delta$ since $\delta\leq 0$, we then have that
$$
c_1=-(3\theta-1+(1+\theta)(|\delta|-\delta)) > 0 \Leftrightarrow \frac{3\theta-1}{2(1+\theta)} < \delta.
$$
\noindent Furthermore,
$$
c_2=1-3\theta-2|\delta|-2\theta|\delta|+2\theta\delta+\delta > 0 \Leftrightarrow \frac{3\theta-1}{3+4\theta} < \delta.
$$
\noindent Observe that if $0\leq \theta < \frac{1}{3}$, then
$$
3\theta-1 <  \frac{3\theta-1}{2(1+\theta)} < \frac{3\theta-1}{3+4\theta}.
$$
\noindent
This implies by \eqref{osi} that both $c_1:=-(3\theta-1+(1+\theta)(|\delta|-\delta))>0$ and $c_2:=1-3\theta-2|\delta|-2\theta|\delta|+2\theta\delta+\delta > 0$ if
\begin{equation}\label{ade6}
\frac{3\theta-1}{3+4\theta} < \delta \leq  0.
\end{equation}
By \eqref{ade5}, we obtain
\begin{eqnarray}\label{ade7}
\Gamma_{n+1}+c_1\|x_n-x_{n-1}\|^2&\leq& \Gamma_{n}+c_1\|x_{n-1}-x_{n-2}\|^2\nonumber \\
&&-c_2\|x_{n-1}-x_{n-2}\|^2.
\end{eqnarray}
Letting $\bar{\Gamma}_n:=\Gamma_n+c_1\|x_{n-1}-x_{n-2}\|^2$, we obtain from \eqref{ade7} that
\begin{equation}\label{afikun}
 \bar{\Gamma}_{n+1} \leq \bar{\Gamma}_{n}.
\end{equation}
This implies from \eqref{afikun} that the sequence $\{\bar{\Gamma}_{n}\}$ is decreasing and thus $\underset{n\rightarrow \infty}\lim \bar{\Gamma}_{n}$ exists. Consequently, we have from \eqref{ade7} that
\begin{eqnarray}\label{ade9}
\underset{n\rightarrow \infty}\lim  c_2\|x_{n-1}-x_{n-2}\|^2=0.
	\end{eqnarray}
Hence,
\begin{eqnarray}\label{ade10}
\underset{n\rightarrow \infty}\lim \|x_{n-1}-x_{n-2}\|=0.
	\end{eqnarray}
Using \eqref{ade10} and existence of limit of $\{\bar{\Gamma}_{n}\}$, we have that $\underset{n\rightarrow \infty}\lim \Gamma_n$ exists. Furthermore,
\begin{eqnarray}\label{ade14}
\|x_{n+1}-y_n\|& =& \|x_{n+1}-x_n-\theta(x_n-x_{n-1})-\delta(x_{n-1}-x_{n-2})\|\nonumber \\
&\leq& \|x_{n+1}-x_n\|+\theta\|x_n-x_{n-1}\|+|\delta|\|x_{n-1}-x_{n-2}\|\rightarrow 0
\end{eqnarray}
as $n\rightarrow \infty.$
Therefore,
\begin{eqnarray}\label{ade15}
\underset{n\rightarrow \infty}\lim \|J_{\lambda}^A(y_n)-y_n\|=0.
	\end{eqnarray}
Also,
\begin{eqnarray}\label{ade16}
\|y_n-x_n\|\leq \theta\|x_n-x_{n-1}\|+|\delta|\|x_{n-1}-x_{n-2}\|\rightarrow 0, n\rightarrow \infty.
	\end{eqnarray}
Since $\underset{n\rightarrow \infty}\lim \Gamma_n$ exists and $\underset{n\rightarrow \infty}\lim \|x_n-x_{n-1}\|=0$, we obtain from \eqref{ade13} that the sequence $\{x_n\}$ is bounded. Hence $\{x_n\}$ has at least one accumulation point $v^* \in H$. Assume that $\{x_{n_k}\} \subset \{x_{n}\}$ such that $x_{n_k} \rightharpoonup v^*,$ $k \to \infty$. Since $y_n-x_n\rightarrow 0, n\rightarrow \infty$, we have that $y_{n_k}\rightharpoonup v^*$, $k \to \infty$. Also, by \eqref{ade15}, we have that $J_{\lambda}^A(y_{n_k}) \rightharpoonup v^*,$ $k \to \infty$.
Since $J_{\lambda}^A (y_n)=(I+\lambda A)^{-1}y_n$ we obtain
$$
\frac{1}{\lambda}\Big(y_{n_k}-J_{\lambda}^A(y_{n_k})\Big) \in A(J_{\lambda}^A(y_{n_k})).
$$
\noindent By the monotonicity of $A$, we have $\forall n_k$,
\begin{equation}\label{ade17}
\langle x- J_{\lambda}^A(y_{n_k}), w-\frac{1}{\lambda}\Big(y_{n_k}-J_{\lambda}^A(y_{n_k})\Big) \rangle \geq 0,\  \forall \ x, w \  \mbox{satisfying} \ w\in A(x).
\end{equation}
Using  \eqref{ade15} in \eqref{ade17}  we get
$\langle x-v^*, w \rangle \geq 0, \forall x,w$ satisfying $w\in A(x)$. Since $A$  is maximal monotone, we conclude that $v^* \in A^{-1}(\textbf{0})$ (see, for example, \cite{Rockafellar27}).\\

\noindent
Since $\underset{n\rightarrow \infty}\lim \Gamma_{n}$ exists and $\underset{n\rightarrow \infty}\lim \|x_{n+1}-x_n\|=0$, we have that
\begin{eqnarray}\label{omilomi2}
\underset{n\rightarrow \infty}\lim \Big[\|x_n-x^*\|^2-\theta\|x_{n-1}-x^*\|^2-\delta\|x_{n-2}-x^*\|^2\Big]
\end{eqnarray}
exists.\\

\noindent We now show that $x_n\rightharpoonup x^* \in A^{-1}(\textbf{0})$. Let us assume that there exist $\{x_{n_k}\} \subset \{x_n\}$ and $\{x_{n_j}\} \subset \{x_n\}$ such that $x_{n_k}\rightharpoonup v^*, k\rightarrow \infty$ and $x_{n_j}\rightharpoonup x^*, j\rightarrow \infty$. We show that $v^*=x^*$.\\

\noindent Observe that
\begin{equation}\label{away1}
2\langle x_n,x^*-v^*\rangle =\|x_n-v^*\|^2-\|x_n-x^*\|^2-\|v^*\|^2+\|x^*\|^2,
\end{equation}

\begin{equation}\label{away2}
2\langle x_{n-1},x^*-v^*\rangle =\|x_{n-1}-v^*\|^2-\|x_{n-1}-x^*\|^2-\|v^*\|^2+\|x^*\|^2,
\end{equation}
and
\begin{equation}\label{away2a}
2\langle x_{n-2},x^*-v^*\rangle =\|x_{n-2}-v^*\|^2-\|x_{n-2}-x^*\|^2-\|v^*\|^2+\|x^*\|^2,
\end{equation}
Therefore,
\begin{eqnarray}\label{away3}
2\langle -\theta x_{n-1},x^*-v^*\rangle &=&-\theta\|x_{n-1}-v^*\|^2+\theta\|x_{n-1}-x^*\|^2\nonumber \\
&&+\theta\|v^*\|^2-\theta\|x^*\|^2
\end{eqnarray}
and
\begin{eqnarray}\label{away3a}
2\langle -\delta x_{n-2},x^*-v^*\rangle &=&-\delta\|x_{n-2}-v^*\|^2+\delta\|x_{n-2}-x^*\|^2\nonumber \\
&&+\delta\|v^*\|^2-\delta\|x^*\|^2
\end{eqnarray}
Addition of \eqref{away1}, \eqref{away3} and \eqref{away3a} gives
\begin{eqnarray*}
2\langle x_n-\theta x_{n-1}-\delta x_{n-2},x^*-v^*\rangle&=& \Big(\|x_n-v^*\|^2-\theta \|x_{n-1}-v^*\|^2-\delta \|x_{n-2}-v^*\|^2 \Big) \\
  &&-\Big(\|x_n-x^*\|^2-\theta \|x_{n-1}-x^*\|^2-\delta \|x_{n-2}-x^*\|^2 \Big)\\
  &&+(1-\theta-\delta)(\|x^*\|^2-\|v^*\|^2).
\end{eqnarray*}
According to \eqref{omilomi2}, we have
\begin{eqnarray*}
\underset{n\rightarrow \infty}\lim \Big[\|x_n-x^*\|^2 -\theta\|x_{n-1}-x^*\|^2-\delta \|x_{n-2}-x^*\|^2\Big]
\end{eqnarray*}
exists and
\begin{eqnarray*}
\underset{n\rightarrow \infty}\lim \Big[\|x_n-v^*\|^2 -\theta\|x_{n-1}-v^*\|^2-\delta \|x_{n-2}-v^*\|^2 \Big]
\end{eqnarray*}
exists. This implies that
$$
\underset{n\rightarrow \infty}\lim \langle x_n-\theta x_{n-1}-\delta x_{n-2},x^*-v^*\rangle
$$
\noindent exists. Now,
\begin{eqnarray*}
 \langle v^*-\theta v^*-\delta v^*,x^*-v^*\rangle&=& \underset{k\rightarrow \infty}\lim \langle x_{n_k}-\theta x_{n_k-1}-\delta x_{n_k-2},x^*-v^*\rangle  \\
  &=& \underset{n\rightarrow \infty}\lim \langle x_n-\theta x_{n-1}-\delta x_{n-2},x^*-v^*\rangle \\
&=& \underset{j\rightarrow \infty}\lim \langle x_{n_j}-\theta x_{n_j-1}-\delta x_{n_j-2},x^*-v^*\rangle  \\
&=& \langle x^*-\theta x^*-\delta x^*,x^*-v^*\rangle,
\end{eqnarray*}
and this yields
$$
(1-\theta-\delta)\|x^*-v^*\|^2=0.
$$
\noindent Since $\delta \leq 0< 1-\theta$, we obtain that $x^*=v^*$. Hence, $\{x_n\}$ converges weakly to a point in $A^{-1}(0)$. This completes the proof.

\end{proof}

\begin{rem}\label{Rem:DiscussionWeakConvergence}
(a) The conditions imposed on the parameters in our proposed Algorithm \ref{alg1} are weaker  than the ones imposed in
\cite[4.1 Algorithms]{DongJOGO} and \cite[Chapter 4, (4.2.5)]{Liang}. For example, we do not impose the summability conditions in  \cite[Theorem 4.1 (32)]{DongJOGO}, \cite[Theorem 4.2 (35)]{DongJOGO} and \cite[Chapter 4 (4.2.5)]{Liang} in our Algorithm \ref{alg1}. Therefore, our results are improvements over the results obtained in \cite{DongJOGO}.\\[-1mm]

\noindent
(b) The assumptions on the parameters in Algorithm \ref{alg1} are also different from the conditions imposed on the iterative parameters in
\cite[Algorithm 1.2]{CombettesGlaudin}. For example, conditions (b) and (c) of \cite[Algorithm 1.2]{CombettesGlaudin} are not needed in our convergence analysis. More importantly, we do not assume that $\theta+\delta=1$ (as imposed in \cite[Algorithm 1.2]{CombettesGlaudin}) in our convergence analysis \hfill $\Diamond$
\end{rem}

\noindent
In the next result, we give a non-asymptotic $O(1/n)$ convergence rate of our proposed Algorithm \ref{alg1}.

\begin{thm}\label{thm2}
	Let $A:H \to 2^H$ be a maximal monotone. Assume that $A^{-1}(\textbf{0})\neq \emptyset$ and $x_0=x_{-1}=x_{-2}$. Let $\{x_n\}$ be generated by Algorithm \ref{alg1}. Then, for any $x^* \in A^{-1}(\textbf{0})$ and $n>0$, it holds that
\begin{equation}\label{ser}
\underset{0 \leq j \leq n-2}\min
\|x_{j+1}-y_j\|^2 \leq 3\Big(1+\theta^2+\delta^2\Big)\frac{1}{c_2}\frac{(1-\theta-\delta)\|x_0-x^*\|^2}{n-1}
\end{equation}
where $c_2=1-3\theta-2|\delta|-2\theta|\delta|+2\theta\delta+\delta$.
\end{thm}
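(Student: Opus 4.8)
The plan is to convert the Lyapunov-type descent already obtained in the proof of Theorem \ref{thm1} into a summability estimate, and then pass from the sum to the minimum by the standard averaging argument. The starting point is \eqref{afikun} in its sharpened form $\bar{\Gamma}_{n+1} \leq \bar{\Gamma}_{n} - c_2\|x_{n-1}-x_{n-2}\|^2$, which is exactly what \eqref{ade7} gives once one recalls $\bar{\Gamma}_n = \Gamma_n + c_1\|x_{n-1}-x_{n-2}\|^2$. Telescoping this from $n=1$ to $n=N$ and discarding the nonnegative term $\bar{\Gamma}_{N+1}\geq 0$ (nonnegativity of $\bar{\Gamma}_n$ follows from $\Gamma_n\geq 0$, established in \eqref{ade13}, together with $c_1>0$), I obtain, after the reindexing $k=n-1$, the bound $c_2\sum_{k=0}^{N-1}\|x_k-x_{k-1}\|^2 \leq \bar{\Gamma}_1$; taking $N=n$ yields $c_2\sum_{k=0}^{n-1}\|x_k-x_{k-1}\|^2 \leq \bar{\Gamma}_1$.

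Next I would pin down the constant $\bar{\Gamma}_1$ using the initialization $x_0=x_{-1}=x_{-2}$. Under this choice $y_0=x_0$, so $x_1=J_{\lambda}^A(x_0)$, and firm nonexpansiveness of $J_{\lambda}^A$ gives $\|x_1-x^*\|^2 \leq \|x_0-x^*\|^2 - \|x_1-x_0\|^2$. Substituting this into $\Gamma_1 = \|x_1-x^*\|^2 - \theta\|x_0-x^*\|^2 - \delta\|x_{-1}-x^*\|^2 + (1-|\delta|-\theta)\|x_1-x_0\|^2$ and using $\|x_{-1}-x^*\|=\|x_0-x^*\|$, the displacement terms collapse and I arrive at $\bar{\Gamma}_1 = \Gamma_1 \leq (1-\theta-\delta)\|x_0-x^*\|^2$. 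Combined with the previous step this gives $\sum_{k=0}^{n-1}\|x_k-x_{k-1}\|^2 \leq c_2^{-1}(1-\theta-\delta)\|x_0-x^*\|^2$.

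It then remains to dominate each residual $\|x_{j+1}-y_j\|^2$ by consecutive displacements. From the computation in \eqref{ade14} the triangle inequality gives $\|x_{j+1}-y_j\| \leq \|x_{j+1}-x_j\| + \theta\|x_j-x_{j-1}\| + |\delta|\|x_{j-1}-x_{j-2}\|$, and the elementary estimate $(a+b+c)^2 \leq 3(a^2+b^2+c^2)$ yields $\|x_{j+1}-y_j\|^2 \leq 3\big(\|x_{j+1}-x_j\|^2 + \theta^2\|x_j-x_{j-1}\|^2 + \delta^2\|x_{j-1}-x_{j-2}\|^2\big)$. Summing over $0\leq j\leq n-2$, each of the three resulting shifted sums is bounded by $\sum_{k=0}^{n-1}\|x_k-x_{k-1}\|^2$ (here the triple initialization again annihilates the negative-index terms), so $\sum_{j=0}^{n-2}\|x_{j+1}-y_j\|^2 \leq 3(1+\theta^2+\delta^2)c_2^{-1}(1-\theta-\delta)\|x_0-x^*\|^2$. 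Since the range $0\leq j\leq n-2$ contains $n-1$ indices, $(n-1)\min_{0\leq j\leq n-2}\|x_{j+1}-y_j\|^2$ is at most this sum, and dividing by $n-1$ produces \eqref{ser}.

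The telescoping and the norm manipulations are routine; the step that needs genuine care is the evaluation of $\bar{\Gamma}_1$, because the clean constant $(1-\theta-\delta)\|x_0-x^*\|^2$ emerges only thanks to the assumption $x_0=x_{-1}=x_{-2}$, which forces $y_0=x_0$ and lets firm nonexpansiveness absorb the stray $\|x_1-x_0\|^2$ term; without this hypothesis one would be left with an $\|x_1-x^*\|^2$ that cannot be rewritten through the initial data. The secondary nuisance is the bookkeeping of the three shifted index ranges, which must all be made to fit under the single sum controlled by the telescoping estimate.
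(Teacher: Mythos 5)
Your argument is correct and follows the same overall strategy as the paper: telescope the Lyapunov inequality \eqref{ade7}, evaluate the initial value of $\bar{\Gamma}$ using $x_0=x_{-1}=x_{-2}$, bound $\|x_{j+1}-y_j\|^2$ by $3\big(\|x_{j+1}-x_j\|^2+\theta^2\|x_j-x_{j-1}\|^2+\delta^2\|x_{j-1}-x_{j-2}\|^2\big)$, and pass to the minimum. Two points of comparison are worth recording. First, the paper anchors the telescoping at $n=0$, where $\Gamma_0=(1-\theta-\delta)\|x_0-x^*\|^2$ exactly because all displacement terms vanish under the triple initialization; your anchoring at $\bar{\Gamma}_1$ requires the extra firm-nonexpansiveness step to absorb $\|x_1-x_0\|^2$, which works but is slightly more roundabout than necessary. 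Second, and more substantively, your final step — summing $\|x_{j+1}-y_j\|^2$ over $0\le j\le n-2$ and then using $(n-1)\min_j\le\sum_j$ — is the rigorous version of the paper's \eqref{lag3}, which as written invokes
\begin{equation*}
\min_{j}\big(a_j+b_j+c_j\big)\;\le\;\min_{j}a_j+\min_{j}b_j+\min_{j}c_j,
\end{equation*}
an inequality that in general holds with $\ge$ rather than $\le$ (the minima on the right may be attained at different indices). Your sum-then-average route reaches the same constant $3(1+\theta^2+\delta^2)c_2^{-1}(1-\theta-\delta)\|x_0-x^*\|^2/(n-1)$ without this slip, so it is the preferable way to close the proof; the index bookkeeping you carry out (all three shifted sums sitting inside $\sum_{k=0}^{n-1}\|x_k-x_{k-1}\|^2$, with the negative-index terms killed by the initialization) checks out.
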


\begin{proof}
Let $x^* \in A^{-1}(0)$. Observe that
\begin{eqnarray}\label{lag1}
\bar{\Gamma}_{0}&=&\Gamma_{0}=\|x_0-x^*\|^2-\theta\|x_{-1}-x^*\|^2-\delta\|x_{-2}-x^*\|^2+(1-|\delta|-\theta)\|x_0-x_{-1}\|^2\nonumber \\
         &=& \|x_0-x^*\|^2-\theta\|x_{-1}-x^*\|^2-\delta\|x_{-2}-x^*\|^2\nonumber\\
          &=& (1-\theta-\delta)\|x_0-x^*\|^2.
          % \nonumber\\
         %&\leq & \|x_0-x^*\|^2.
\end{eqnarray}
From \eqref{ade7}, we obtain
\begin{eqnarray}\label{lag2}
c_2\sum_{j=0}^{n} \|x_{j-1}-x_{j-2}\|^2 \leq \bar{\Gamma}_0-\bar{\Gamma}_{n+1}.
\end{eqnarray}
This implies that
\begin{eqnarray}\label{lag2a}
\sum_{j=0}^{n} \|x_{j-1}-x_{j-2}\|^2 &\leq& \frac{1}{c_2} \bar{\Gamma}_0=\frac{1}{c_2}\Gamma_0 \nonumber \\
&=&\frac{1}{c_2}(1-\theta-\delta)\|x_0-x^*\|^2.
\end{eqnarray}
This implies that
\begin{eqnarray}\label{lag2aa}
\underset{0 \leq j \leq n}\min\|x_{j-1}-x_{j-2}\|^2 \leq \frac{1}{c_2}\frac{(1-\theta-\delta)\|x_0-x^*\|^2}{n+1}.
\end{eqnarray}
Consequently,
\begin{eqnarray}\label{lag2aa1}
\underset{0 \leq j \leq n-2}\min\|x_{j+1}-x_j\|^2 \leq \frac{1}{c_2}\frac{(1-\theta-\delta)\|x_0-x^*\|^2}{n-1}
\end{eqnarray}
and
\begin{eqnarray}\label{lag2aa2}
\underset{0 \leq j \leq n-1}\min\|x_{j}-x_{j-1}\|^2 \leq \frac{1}{c_2}\frac{(1-\theta-\delta)\|x_0-x^*\|^2}{n}.
\end{eqnarray}
From \eqref{ade14}, we obtain
\begin{eqnarray}\label{osi2}
\|x_{n+1}-y_n\|^2& \leq & \Big(\|x_{n+1}-x_n\|+\theta\|x_n-x_{n-1}\|+|\delta|\|x_{n-1}-x_{n-2}\|\Big)^2\nonumber \\
&\leq & 3\Big(\|x_{n+1}-x_n\|^2+\theta^2\|x_n-x_{n-1}\|^2+\delta^2\|x_{n-1}-x_{n-2}\|^2\Big).
\end{eqnarray}
Therefore,
\begin{eqnarray}\label{lag3}
\underset{0 \leq j \leq n-2}\min
\|x_{j+1}-y_j\|^2 &\leq& 3 \underset{0 \leq j \leq n-2}\min \|x_{j+1}-x_j\|^2+3\theta^2 \underset{0 \leq j \leq n-2}\min \|x_j-x_{j-1}\|^2\nonumber \\
&&+3\delta^2 \underset{0 \leq j \leq n-2}\min\|x_{j-1}-x_{j-2}\|^2\nonumber \\
&\leq& \Big(3+3\theta^2+3\delta^2\Big)\frac{1}{c_2}\frac{(1-\theta-\delta)\|x_0-x^*\|^2}{n-1}.
\end{eqnarray}
This completes the proof.
\end{proof}

\begin{rem}
The non-asymptotic $O(1/n)$ convergence rate of Algorithm \ref{alg1} given in Theorem \ref{thm2} extends the convergence rate given in
\cite[Theorem 2, (2.13)]{Chen} for the case when $\delta=0$ in Algorithm \ref{alg1}.
\end{rem}

\section{Applications}\label{Sec:Applications}
\noindent
We give some applications of our proposed two-step inertial proximal point Algorithm \ref{alg1} to solving
convex-concave saddle-point problem using two-step inertial versions of augmented Lagrangian, the proximal method of multipliers, and ADMM. We also consider two-step inertial versions primal–dual hybrid gradient method and Douglas–Rachford splitting method.

\subsection{Convex–Concave Saddle-Point Problem}\label{sec:appl1}
\noindent
Let us consider the following convex-concave saddle-point problem:
\begin{equation}\label{convex1}
\min_{u\in H_1}\max_{v \in H_2} \phi(u,v),
\end{equation}
where $H_1$ and $H_2$ are real Hilbert spaces equipped with inner product $\langle.,.\rangle$ and
$\phi(.,v)\in \mathcal{F}(H_1), -\phi(u,.)\in \mathcal{F}(H_2)$, where the associated
saddle subdifferential operator
\begin{equation}\label{convex2}
\left(
  \begin{array}{c}
    \partial_u \phi(u,v) \\
    \partial_v(-\phi(u,v) \\
  \end{array}
\right)
\end{equation}
is monotone. Using the ideas in \cite{Rockafellar27}, we apply our proposed Algorithm \ref{alg1} to solve problem \eqref{convex1}
below.

\begin{algorithm}[H]
\caption{2-Step Inertial PPA for Saddle-Point Problem}\label{alg2}
\begin{algorithmic}[1]
\State  Choose parameters $\delta$ and $\theta$ satisfying condition \eqref{osi}. Choose $\phi(.,v)\in \mathcal{F}(H_1), -\phi(u,.)\in \mathcal{F}(H_2),  \hat{u}_0 \in H_1, \hat{v}_0 \in H_2, x_{-1}=x_0=y_0=(\hat{u}_0,\hat{v}_0)$ and $\lambda>0$. Set $n=0.$
\State  Compute as follows:
        \begin{eqnarray}
\left\{  \begin{array}{ll}
      & x_{n+1}=(u_{n+1},v_{n+1})={\rm arg}\min\limits_{u \in H_1} \max\limits_{v \in H_2} \Big\{\phi(u,v)+\frac{1}{2\lambda}\|u-\hat{u}_n\|^2-\frac{1}{2\lambda}\|v-\hat{v}_n\|^2 \Big\},\\
      & y_{n+1}=(\hat{u}_{n+1},\hat{v}_{n+1})=x_{n+1}+\theta(x_{n+1}-x_n)+\delta(x_n-x_{n-1})
      \end{array}
      \right.
      \end{eqnarray}
\State Set $ n \leftarrow n+1 $, and {\bf go to 2}.
\end{algorithmic}
\end{algorithm}

\noindent Furthermore, let us consider the following convex–concave
Lagrangian problem:

\begin{equation}\label{convex3}
\min_{u\in H_1}\max_{v\in H_2}\{L(u,v):f(u)+\langle v,Au-b\rangle \},
\end{equation}
which is associated with the following linearly constrained problem
\begin{eqnarray}\label{convex4}
\left\{  \begin{array}{ll}
      & \min\limits_{u \in H_1} f(u)\\
      & s.t. Au=b,
      \end{array}
      \right.
\end{eqnarray}
where $A \in B(H_1,H_2)$ and $b \in H_2$. Applying our proposed Algorithm \ref{alg1} to solving problem \eqref{convex3} gives

\begin{algorithm}[H]
\caption{2-Step Inertial Proximal Method of Multipliers}\label{alg3}
\begin{algorithmic}[1]
\State  Choose parameters $\delta$ and $\theta$ satisfying condition \eqref{osi}. Choose $\phi(.,v)\in \mathcal{F}(H_1), -\phi(u,.)\in \mathcal{F}(H_2), \hat{u}_0 \in H_1, \hat{v}_0 \in H_2,
 x_{-1}=x_0=y_0=(\hat{u}_0,\hat{v}_0)$ and $\lambda>0$. Set $n=0.$
\State  Compute as follows:
        \begin{eqnarray}
\left\{  \begin{array}{ll}
      &u_{n+1}={\rm arg}\min\limits_{u \in H_1}\Big\{L(u,\hat{v}_n)+\frac{\lambda}{2}\|Au-b\|^2+\frac{1}{2\lambda}\|u-\hat{u}_n\|^2 \Big\},\\
      & x_{n+1}=(u_{n+1},\hat{v}_n+\lambda(Au_{n+1}-b)),\\
      & y_{n+1}=(\hat{u}_{n+1},\hat{v}_{n+1})=x_{n+1}+\theta(x_{n+1}-x_n)+\delta(x_n-x_{n-1})
      \end{array}
      \right.
      \end{eqnarray}
\State Set $ n \leftarrow n+1 $, and {\bf go to 2}.
\end{algorithmic}
\end{algorithm}

\subsection{Version of Primal–Dual Hybrid Gradient Method}\label{sec:appl2}
\noindent In this section, let us consider the following coupled convex-concave saddle-point problem

\begin{equation}\label{convex5}
\min_{u\in H_1}\max_{v\in H_2}\{\phi(u,v)\equiv f(u)+\langle Ku,v\rangle-g(v)\},
\end{equation}
where $f \in \mathcal{F}(H_1), g \in \mathcal{F}(H_2)$ and $K \in \mathcal{B}(H_1,H_2)$. The primal-dual hybrid gradient (PDHG) method (see, for example, \cite{ChamPo,Esser}) is one of the popular methods for solving problem \eqref{convex5} and it is a preconditioned proximal point algorithm with $\lambda=1$ for the saddle subdifferential operator of $\phi$ given in \eqref{convex2} (see, \cite{ChambollePock,He}). Given the associated preconditioner as

\begin{equation}\label{convex6}
P= \left(
  \begin{array}{cc}
    \frac{1}{\tau}I & -K^* \\
    -K & \frac{1}{\sigma}I \\
  \end{array}
\right)
\end{equation}
which is positive definite when $\tau\sigma\|K\|^2<1$, $\|K\|:=\sup_{\|x\|\leq 1}\|Kx\|$; we adapt our proposed Algorithm \ref{alg1} to solve problem \eqref{convex5} as given below:

\begin{algorithm}[H]
\caption{2-Step Inertial PDHG Method}\label{alg4}
\begin{algorithmic}[1]
\State  Choose parameters $\delta$ and $\theta$ satisfying condition \eqref{osi}. Choose $f\in \mathcal{F}(H_1), g\in \mathcal{F}(H_2), K \in \mathcal{B}(H_1,H_2), \hat{u}_0 \in H_1, \hat{v}_0 \in H_2,
\tau\sigma\|K\|^2<1,  x_{-1}=x_0=y_0=(\hat{u}_0,\hat{v}_0)$. Set $n=0.$
\State  Compute as follows:
        \begin{eqnarray}
\left\{  \begin{array}{ll}
      &u_{n+1}={\rm arg}\min\limits_{u \in H_1}\Big\{f(u)+ \langle Ku,\hat{v}_n\rangle+ \frac{1}{2\tau}\|u-\hat{u}_n\|^2 \Big\},\\
      & v_{n+1}={\rm arg}\min\limits_{v \in H_2}\Big\{g(v)-\langle K(2u_{n+1}-\hat{u}_n),v\rangle+ \frac{1}{2\sigma}\|v-\hat{v}_n\|^2 \Big\},\\
      & x_{n+1}=(u_{n+1},v_{n+1}),\\
      & y_{n+1}=(\hat{u}_{n+1},\hat{v}_{n+1})=x_{n+1}+\theta(x_{n+1}-x_n)+\delta(x_n-x_{n-1})
      \end{array}
      \right.
      \end{eqnarray}
\State Set $ n \leftarrow n+1 $, and {\bf go to 2}.
\end{algorithmic}
\end{algorithm}

\subsection{Version of Douglas–Rachford Splitting Method}\label{sec:appl3}
\noindent This section presents an application of Algorithm \ref{alg1} to the
Douglas-Rachford splitting method for finding zeros of an operator
$ T $ such that $ T $ is the sum of two maximal monotone operators,
i.e. $ T = A + B $ with $ A, B : H \to 2^H $ being maximal monotone
multi-functions on a Hilbert space $ H $. The method was originally
introduced in \cite{DouglasRachford} in a finite-dimensional setting,
its extension to maximal monotone mappings in Hilbert spaces can be
found in \cite{LionsMercier}.\\

\noindent
The Douglas–Rachford splitting method \cite{DouglasRachford,LionsMercier} iteratively applies the operator
\begin{equation}\label{convex8}
G_{\lambda,A,B}:=J_{\lambda}^A o(2J_{\lambda}^B-I)+(I-J_{\lambda}^B)
\end{equation}
and has found to be effective in many applications including ADMM. In  \cite[Theorem 4]{12}, the Douglas–Rachford operator \eqref{convex8} was found to be a resolvent $J^M_{\lambda,A,B}$ of a maximal monotone operator
\begin{equation}\label{convex9}
M_{\lambda,A,B}:=G^{-1}_{\lambda,A,B}-I.
\end{equation}
Therefore, the Douglas–Rachford splitting method is a special case of the proximal
point method (with $\lambda=1$), written as
\begin{equation}\label{convex10}
v_{n+1}= J^M_{\lambda,A,B}(v_n)=G_{\lambda,A,B}(v_n).
\end{equation}
Hence, we can apply our Algorithm \ref{alg1} to the Douglas-Rachford splitting method as given below:

\begin{algorithm}[H]
\caption{2-Step Inertial Douglas-Rachford Splitting Method}\label{alg5}
\begin{algorithmic}[1]
\State  Choose parameters $\delta$ and $\theta$ satisfying condition \eqref{osi}. Choose $u_0=v_0=v_{-1} \in H, \lambda>0$. Set $n=0.$
\State  Compute as follows:
        \begin{eqnarray}
\left\{  \begin{array}{ll}
      &v_{n+1}=G_{\lambda,A,B}(u_n),\\
      & u_{n+1}=v_{n+1}+\theta(v_{n+1}-v_n)+\delta(v_n-v_{n-1})
      \end{array}
      \right.
      \end{eqnarray}
\State Set $ n \leftarrow n+1 $, and {\bf go to 2}.
\end{algorithmic}
\end{algorithm}

\subsection{Version of Alternating Direction Method of Multipliers (ADMM)}\label{sec:appl4}
\noindent
Suppose $H_1, H_2$ and$G$ are real Hilbert spaces and let us consider the following linearly constrained convex problem

\begin{eqnarray}\label{convex11}
\left\{  \begin{array}{ll}
      & \min\limits_{x \in H_1,z\in H_2} f(x)+g(z)\\
      & s.t. Ax+Bz=c,
      \end{array}
      \right.
\end{eqnarray}
where $f \in \mathcal{F}(H_1), g \in \mathcal{F}(H_2), A \in \mathcal{B}(H_1,G), B \in \mathcal{B}(H_2,G)$ and $c \in G$. The dual of problem \eqref{convex11} is given by
\begin{equation}\label{convex12}
\max_{v\in G}\{-f^*(-A^*v)-g^*(-B^*v)+\langle c,v\rangle\},
\end{equation}
where the conjugate of $f$ is given by $f^*(y):=\sup_{x\in H_1}\{\langle y,x\rangle-f(x)\}$ and the conjugate of $g$ is given by
$g^*(y):=\sup_{z\in H_2}\{\langle y,z\rangle-g(z)\}$. Solving the dual problem \eqref{convex12} is equivalent to solving the inclusion problem: find $v \in G$ subject to
\begin{equation}\label{convex13}
0 \in -A\partial f^*(-A^*v)-B\partial g^*(-B^*v)-c.
\end{equation}
Using similar ideas in \cite[Section 6.4]{Kim}, we adapt the 2-Step Inertial Douglas-Rachford Splitting Method in Algorithm \ref{alg5} to the following 2-Step Inertial ADMM (where we apply Algorithm \ref{alg5} is adapted to monotone inclusion problem \eqref{convex13}):

\begin{algorithm}[H]
\caption{2-Step Inertial ADMM}\label{alg6}
\begin{algorithmic}[1]
\State  Choose parameters $\delta$ and $\theta$ satisfying condition \eqref{osi}. Choose
$f\in \mathcal{F}(H_1), g\in \mathcal{F}(H_2), A \in \mathcal{B}(H_1,G), B \in \mathcal{B}(H_2,G), x_0 \in H_1, z_0 \in H_2,
\hat{v}_0 \in G, \lambda >0$. Set $n=0.$
\State  Compute as follows:
        \begin{eqnarray}
\left\{  \begin{array}{ll}
      &x_{n+1}={\rm arg}\min\limits_{x \in H_1}\Big\{f(x)+ \langle \hat{v}_n,Ax+Bz_n-c\rangle+ \frac{\lambda}{2}\|Ax+Bz_n-c\|^2 \Big\},\\
      & \hat{\eta}_n= \hat{v}_n, n=0,1,\\
      &  \hat{\eta}_n=\hat{v}_n+\theta(\hat{v}_n-\hat{v}_{n-1}+\lambda A(x_{n+1}-x_n))+\delta(\hat{v}_{n-1}-\hat{v}_{n-2}+\lambda A(x_n-x_{n-1}))\\
      & n=2,3,\ldots \\
      & z_{n+1}={\rm arg}\min\limits_{z \in H_2}\Big\{g(z)+ \langle \hat{\eta}_n,Ax_{n+1}+Bz-c\rangle+ \frac{\lambda}{2}\|Ax_{n+1}+Bz-c\|^2 \Big\},\\
      & \hat{v}_{n+1}=\hat{\eta}_n+\lambda(Ax_{n+1}+Bz_{n+1}-c)
      \end{array}
      \right.
      \end{eqnarray}
\State Set $ n \leftarrow n+1 $, and {\bf go to 2}.
\end{algorithmic}
\end{algorithm}

\noindent
 We remark that the idea of Algorithm \ref{alg6} can be applied to stochastic ADMM or its multi-block extensions in the recent works in \cite{Bai2,Bai3} for solving problems arising in machine learning because the basic model therein is a two-block separable convex optimization problem.

\section{Numerical Illustrations}\label{sec:numerics}
\noindent The focus of this section is to provide some computational experiments to demonstrate the effectiveness, accuracy and easy-to-implement nature of our proposed algorithms. We further compare our proposed schemes with some existing methods in the literature. All codes were written in MATLAB R2020b and performed on a PC Desktop
Intel(R) Core(TM) i7-6600U CPU @ 3.00GHz 3.00 GHz, RAM 32.00 GB.
\vskip 2mm

\noindent Numerical comparisons are made with the algorithm proposed in \cite{Chen}, denoted as Chen et al. Alg. 1, algorithm proposed in \cite{Kim}, denoted as Kim Alg, and algorithm proposed in \cite{Bai1}, denoted as P-PPA Alg.

\begin{exm}\label{ex1}
First, we compare our proposed 2-Step Inertial Proximal Method of Multipliers in Algorithm \ref{alg3} with the methods in \cite{Chen,Kim} using the following basis pursuit problem:
\begin{eqnarray}\label{h1}
\left\{  \begin{array}{ll}
      & \min\limits_{x \in \mathbb{R}^N} \|u\|_1\\
      & {\rm subject~ to}~~ Au=b,
      \end{array}
      \right.
\end{eqnarray}
where $A\in \mathbb{R}^{M \times N}$ and $b\in \mathbb{R}^M$. In our experiment, we consider $N= 200, 500$ and $M = 50, 100$
and $A$ is randomly generated. A true sparse $u^*$ is randomly generated followed by a
thresholding to sparsify nonzero elements, and $b$ is then given by $Au^*$. We run 100
iterations of the proximal method of multipliers and its variants with initial $x_0= \textbf{0}$. Since the $u_{n+1}$-update does not have a closed form, we used a sufficient
number of iterations to solve the $u_{n+1}$-update using the strongly convex version of
FISTA \cite{Beck} in \cite[Theorem 4.10]{Chambolle}. The stopping criterion for this example is $D_n:=||y_n - x_{n+1}||_2\leq \epsilon$, where $\epsilon=10^{-4}$. The parameter used are provided in Table \ref{table1a}

\begin{table}[H]
\caption{Methods Parameters for Example \ref{ex1}}
\centering
\begin{tabular}{c c c c c c c c c c c c c c}
 \toprule[1.5pt] \\
Proposed Alg. \ref{alg3} & $\displaystyle \lambda_1=10^{-4}$ & $\displaystyle \theta = 0.1$ & $\displaystyle \delta = -0.14412$\\
\toprule[1.5pt]\\
Chen \it{et al.} Alg. & $\displaystyle \lambda_1=10^{-4}$ & $\displaystyle \theta = 0.1$ & $\displaystyle \delta = 0$\\
\toprule[1.5pt] \\
Kim Alg. & $\displaystyle \lambda_1=10^{-4}$ \\
\toprule[1.5pt]
\end{tabular}
\label{table1a}
\end{table}

\begin{table}[H]
\caption{Example \ref{ex1} comparison for different $N$ and $M$}
\centering % used for centering table
\begin{tabular}{c c c c c c c c c c c c c c}
\toprule[1.5pt]
    & \multicolumn{2}{c}{$(N,M)=(200,50)$} & \multicolumn{2}{c}{$(N,M)=(200,100)$} \\
         \cline{2-3}\cline{4-5}\cline{6-7}\cline{8-9}\\
%\toprule[1.5pt]
  & No. of Iter. & CPU Time & No. of Iter. & CPU Time\\
 \toprule[1.5pt] \\
Proposed Alg. \ref{alg3}  & 3 & $4.3561\times 10^{-3}$ & 3 & $4.1955\times 10^{-3}$ \\ [0.5ex]
\hline \\
Chen \it{et al.} Alg. 1 & 3 & $5.8181\times 10^{-3}$ & 3 & $4.7572\times 10^{-3}$ \\ [0.5ex]
\hline \\
Kim Alg.  &  3 & $5.0265\times 10^{-3}$ & 3 & $5.5091\times 10^{-3}$ \\ [0.5ex]
\toprule[1.5pt]
    & \multicolumn{2}{c}{$(N,M)=(500,50)$} & \multicolumn{2}{c}{$(N,M)=(500,100)$} \\
         \cline{2-3}\cline{4-5}\cline{6-7}\cline{8-9}\\
%\toprule[1.5pt]
  & No. of Iter. & CPU Time & No. of Iter. & CPU Time \\
 \toprule[1.5pt] \\
Proposed Alg. \ref{alg3}  & 3 & $2.7343\times 10^{-2}$ & 3 & $3.0797\times 10^{-2}$ \\ [0.5ex]
\hline \\
Chen \it{et al.} Alg. 1 & 3 & $3.0817\times 10^{-2}$ & 3 & $3.1057\times 10^{-2}$ \\ [0.5ex]
\hline \\
Kim Alg.  &  3 & $2.8120\times 10^{-2}$ & 3 & $3.8054\times 10^{-2}$ \\ [0.5ex]
 \hline
\end{tabular}
\label{table1b}
\end{table}

\begin{figure}[H]
\minipage{0.53\textwidth}
\includegraphics[width=\linewidth]{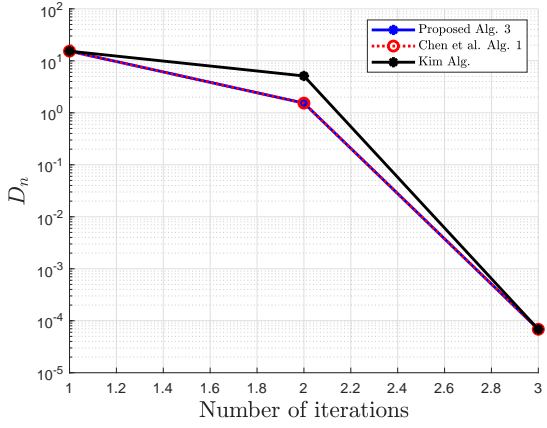}
\caption{Example \ref{ex1}: $(N,M)=(200,50)$}\label{Fig1a}
\endminipage\hfill
\minipage{0.53\textwidth}
\includegraphics[width=\linewidth]{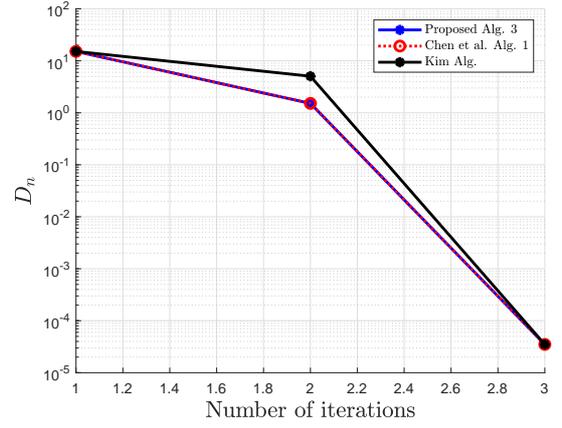}
\caption{Example \ref{ex1}: $(N,M)=(200,100)$}\label{Fig1b}
\endminipage
\end{figure}

\begin{figure}[H]
\minipage{0.53\textwidth}
\includegraphics[width=\linewidth]{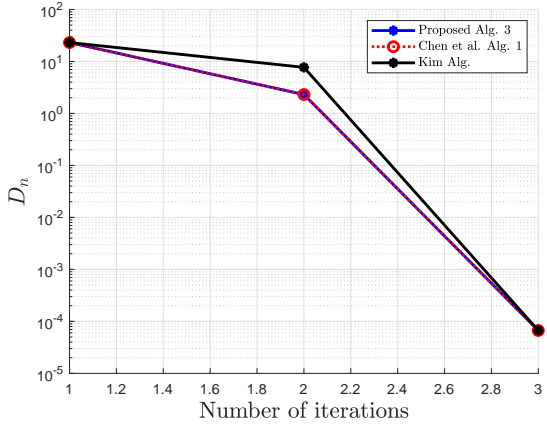}
\caption{Example \ref{ex1}: $(N,M)=(500,50)$}\label{Fig1c}
\endminipage\hfill
\minipage{0.53\textwidth}
\includegraphics[width=\linewidth]{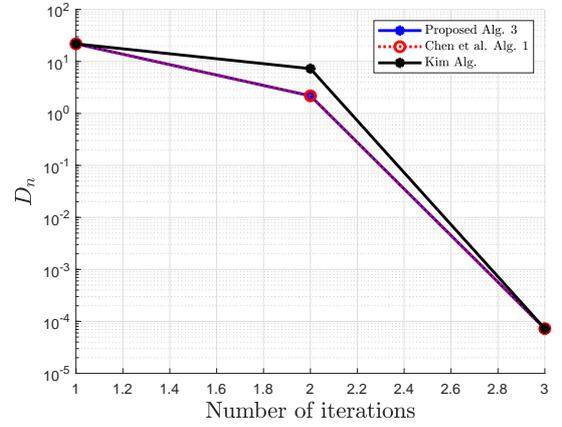}
\caption{Example \ref{ex1}: $(N,M)=(500,100)$}\label{Fig1d}
\endminipage
\end{figure}
\end{exm}

\begin{exm}\label{ex3}
In this example, we apply our proposed 2-Step Inertial ADMM (Algorithm \ref{alg6}) to the problem
\begin{eqnarray}\label{h4}
\left\{  \begin{array}{ll}
      & \min\limits_{x\in \mathbb{R}^N}\max\limits_{z\in \mathbb{R}^M} \frac{1}{2}\|Fx-b\|^2+\gamma \|z\|_1\\
      & s.t. Dx - z=0
      \end{array}
      \right.
\end{eqnarray}
where $\gamma >0$ and compare with the methods in \cite{Bai1,Chen,Kim}. The problem \eqref{h4} is associated with the total-variation-regularized least squares
problem

\begin{equation}\label{h5}
\min_{x\in \mathbb{R}^N}\frac{1}{2}\|Fx-b\|^2+\gamma \|Dx\|_1,
\end{equation}
where $F \in \mathbb{R}^{p \times N}, b\in \mathbb{R}^p$, and a matrix $D\in \mathbb{R}^{M \times N}$ is given as

\begin{eqnarray*}
D=
\left(
  \begin{array}{ccccccc}
   -1 & 1 & 0 & 0 & \ldots & 0&  \\
    0 & 1 & -1 & 0 & \ldots & 0& \\
  \vdots &\ddots  &\ddots & \ddots &\ddots&\vdots&  \\
    \vdots &\ddots  &0  & 1 & -1&0& \\
    0&\ldots &\ldots&0&1&-1&
  \end{array}
\right)
\end{eqnarray*}
Take $f(x):=\frac{1}{2}\|Fx-b\|^2, g(z):= \gamma \|z\|_1, A=D, B=-I$ and $c=0$ in \eqref{convex11}, our
2-Step Inertial ADMM \ref{alg6} reduces to:

\begin{algorithm}[H]
\caption{2-Step Inertial ADMM}\label{alg7}
\begin{algorithmic}[1]
\State  Choose parameters $\delta$ and $\theta$ satisfying condition \eqref{osi}. Choose
$x_0 \in \mathbb{R}^N, z_0 \in \mathbb{R}^M,
\hat{v}_0 \in \mathbb{R}^M, \lambda >0$. Set $n=0.$
\State  Compute as follows:
        \begin{eqnarray}
\left\{  \begin{array}{ll}
      &x_{n+1}={\rm arg}\min\limits_{x \in \mathbb{R}^N}\Big\{\frac{1}{2}\|Fx-b\|^2+ \langle \hat{v}_n,Dx-z_n\rangle+ \frac{\lambda}{2}\|Dx-z_n\|^2 \Big\}\\
      &=(F^T F+\lambda D^T D)^{-1}(D^T(\lambda z_n-\hat{v}_n)+F^T b),\\
      & \hat{\eta}_n= \hat{v}_n, n=0,1,\\
      &  \hat{\eta}_n=\hat{v}_n+\theta(\hat{v}_n-\hat{v}_{n-1}+\lambda D(x_{n+1}-x_n))+\delta(\hat{v}_{n-1}-\hat{v}_{n-2}+\lambda D(x_n-x_{n-1}))\\
      & n=2,3,\ldots \\
      & z_{n+1}={\rm arg}\min\limits_{z \in \mathbb{R}^M}\Big\{\gamma \|z\|_1+ \langle \hat{\eta}_n,Dx_{n+1}-z\rangle+ \frac{\lambda}{2}\|Dx_{n+1}-z\|^2 \Big\}\\
      & =S_{\frac{\gamma}{\lambda}}\Big(D x_{n+1}+\frac{1}{\lambda}\hat{\eta}_n \Big)\\
      & \hat{v}_{n+1}=\hat{\eta}_n+\lambda(Dx_{n+1}-z_{n+1}),
      \end{array}
      \right.
      \end{eqnarray}
\State Set $ n \leftarrow n+1 $, and {\bf go to 2}.
\end{algorithmic}
\end{algorithm}
\noindent where $S_{\tau}(z):= \max\{|z|-\tau,0\} \odot {\rm sign}(z)$ is the soft-thresholding operator,
with the element-wise absolute value, maximum and multiplication operators, $|.|,
\max\{.,.\}$ and $\odot$, respectively.\\

\noindent
In this numerical test, we consider different cases for the choices of $N$, $M$, and $p$. A true vector $x^*$
is constructed such that a vector $Dx^*$ has few nonzero elements. A matrix $F$ is randomly generated and a noisy vector $b$ is generated by adding randomly generated (noise) vector to $Fx^*$.\\

\noindent Case 1: $N=100$, $M=99$, and $p=5$\\
Case 2: $N=200$, $M=199$, and $p=10$\\
Case 3: $N=300$, $M=299$, and $p=20$\\
Case 4: $N=400$, $M=399$, and $p=40$\\

\noindent The stopping criterion for this example is $D_n\leq \epsilon$, where $\epsilon=10^{-5}$. The parameter used are provided in Table \ref{table2a}.

\begin{table}[H]
\caption{Methods Parameters for Example \ref{ex3}}
\centering
\begin{tabular}{c c c c c c c c c c c c c c}
 \toprule[1.5pt] \\
Proposed Alg. \ref{alg3} & $\displaystyle \lambda_1=0.1$ & $\displaystyle \theta = 0.1$ & $\displaystyle \delta = -10^{-3}$ &  $\displaystyle \gamma = 0.01$\\
\\
& $D_n:=||Dx_n - z_n||_2^2$\\
\toprule[1.5pt]\\
Chen \it{et al.} Alg. & $\displaystyle \lambda_1=0.1$ & $\displaystyle \theta = 0.1$ & $\displaystyle \delta = 0$ & $\displaystyle \gamma = 0.01$\\
\\
& $D_n:=||Dx_n - z_n||_2^2$\\
\toprule[1.5pt] \\
Kim Alg. & $\displaystyle \lambda_1=0.1$ & $\displaystyle \gamma = 0.01$ & $D_n:=||Dx_n - z_n||_2^2$ \\
\toprule[1.5pt] \\
 P-PPA Alg. &  $\displaystyle s = 10$ &  $\displaystyle \rho = 6$ &  $\displaystyle \varepsilon = 1.5$ &  $\displaystyle \tau = 1.5$ \\
\\
 &  $\displaystyle \sigma = \frac{1.1}{s}$ &  $\displaystyle \mu = 0.1 ||F^T b||_\infty $  & $D_n:=||y_{n+1} - y_n||_2$\\
\toprule[1.5pt]
\end{tabular}
\label{table2a}
\end{table}

\begin{table}[H]
\caption{Example \ref{ex1} comparison for different cases}
\centering % used for centering table
\begin{tabular}{c c c c c c c c c c c c c c}
\toprule[1.5pt]
    & \multicolumn{2}{c}{Case 1} & \multicolumn{2}{c}{Case 2} \\
         \cline{2-3}\cline{4-5}\cline{6-7}\cline{8-9}\\
%\toprule[1.5pt]
  & No. of Iter. & CPU Time & No. of Iter. & CPU Time\\
 \toprule[1.5pt] \\
Proposed Alg. \ref{alg6}  & 7 & $1.9990\times 10^{-4}$ & 21 & $1.2536\times 10^{-3}$ \\ [0.5ex]
\hline \\
Chen \it{et al.} Alg. 1 & 8 & $1.9240\times 10^{-4}$ & 43 & $2.6423\times 10^{-3}$ \\ [0.5ex]
\hline \\
Kim Alg.  &  11 & $2.9860\times 10^{-4}$ & 41 & $2.3246\times 10^{-3}$ \\ [0.5ex]
\hline \\
 P-PPA Alg.  &   39 &  $1.2939\times 10^{-3}$ &  40 &  $2.6799\times 10^{-3}$ \\ [0.5ex]
\toprule[1.5pt]
    & \multicolumn{2}{c}{Case 3} & \multicolumn{2}{c}{Case 4} \\
         \cline{2-3}\cline{4-5}\cline{6-7}\cline{8-9}\\
%\toprule[1.5pt]
  & No. of Iter. & CPU Time & No. of Iter. & CPU Time \\
 \toprule[1.5pt] \\
Proposed Alg. \ref{alg3}  & 21 & $2.2213\times 10^{-3}$ & 24 & $7.2828\times 10^{-3}$ \\ [0.5ex]
\hline \\
Chen \it{et al.} Alg. 1 & 52 & $6.4371\times 10^{-3}$ & 60 & $2.3097\times 10^{-2}$ \\ [0.5ex]
\hline \\
Kim Alg.  &  24 & $2.7002\times 10^{-3}$ & 43 & $1.5199\times 10^{-2}$ \\ [0.5ex]
\hline \\
 P-PPA Alg.  &   41 &  $3.5834\times 10^{-3}$ &  44 &  $1.2796\times 10^{-2}$ \\ [0.5ex]
 \hline
\end{tabular}
\label{table2b}
\end{table}

\begin{figure}[H]
\minipage{0.53\textwidth}
\includegraphics[width=\linewidth]{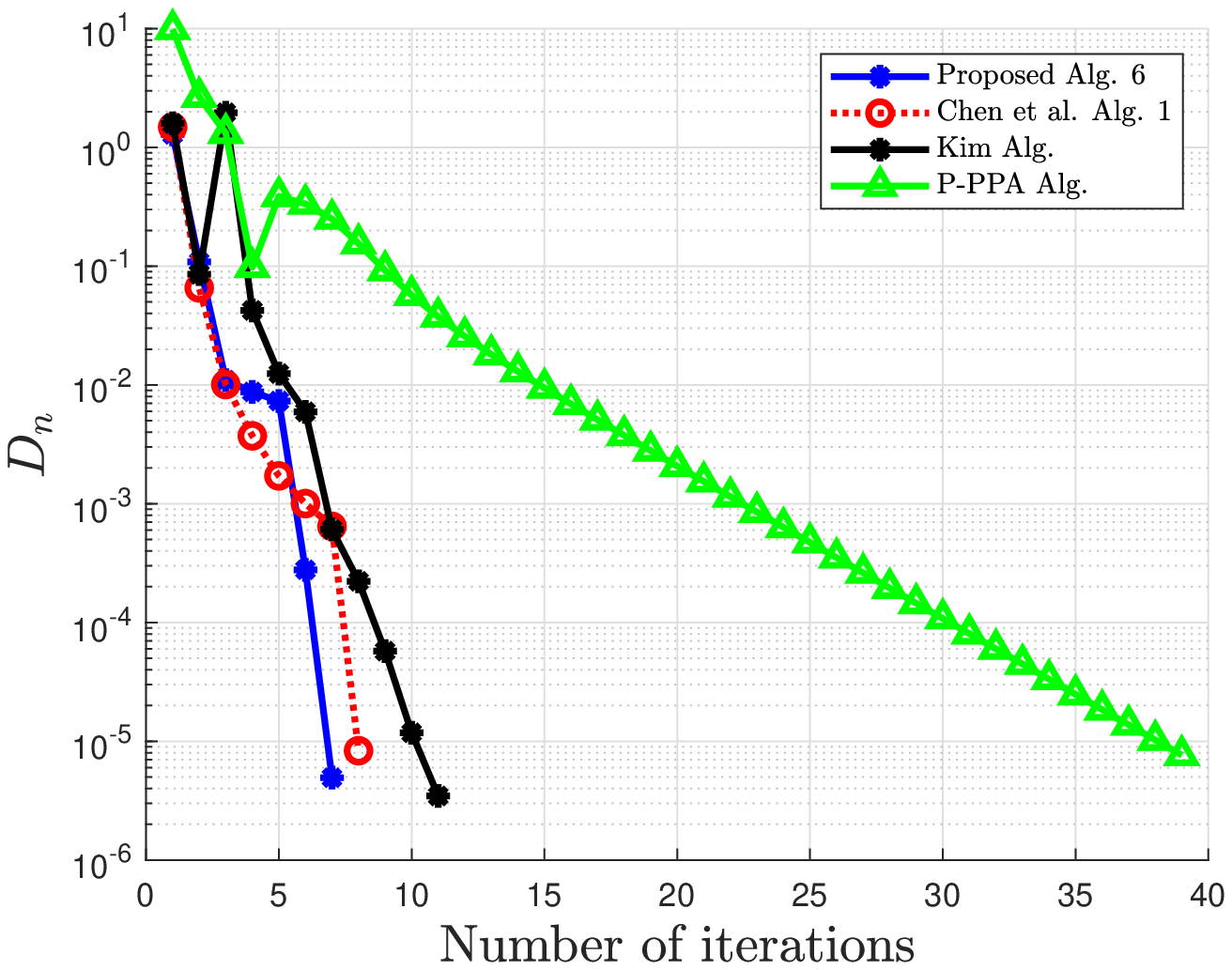}
\caption{Example \ref{ex3}: Case 1}\label{Fig2a}
\endminipage\hfill
\minipage{0.53\textwidth}
\includegraphics[width=\linewidth]{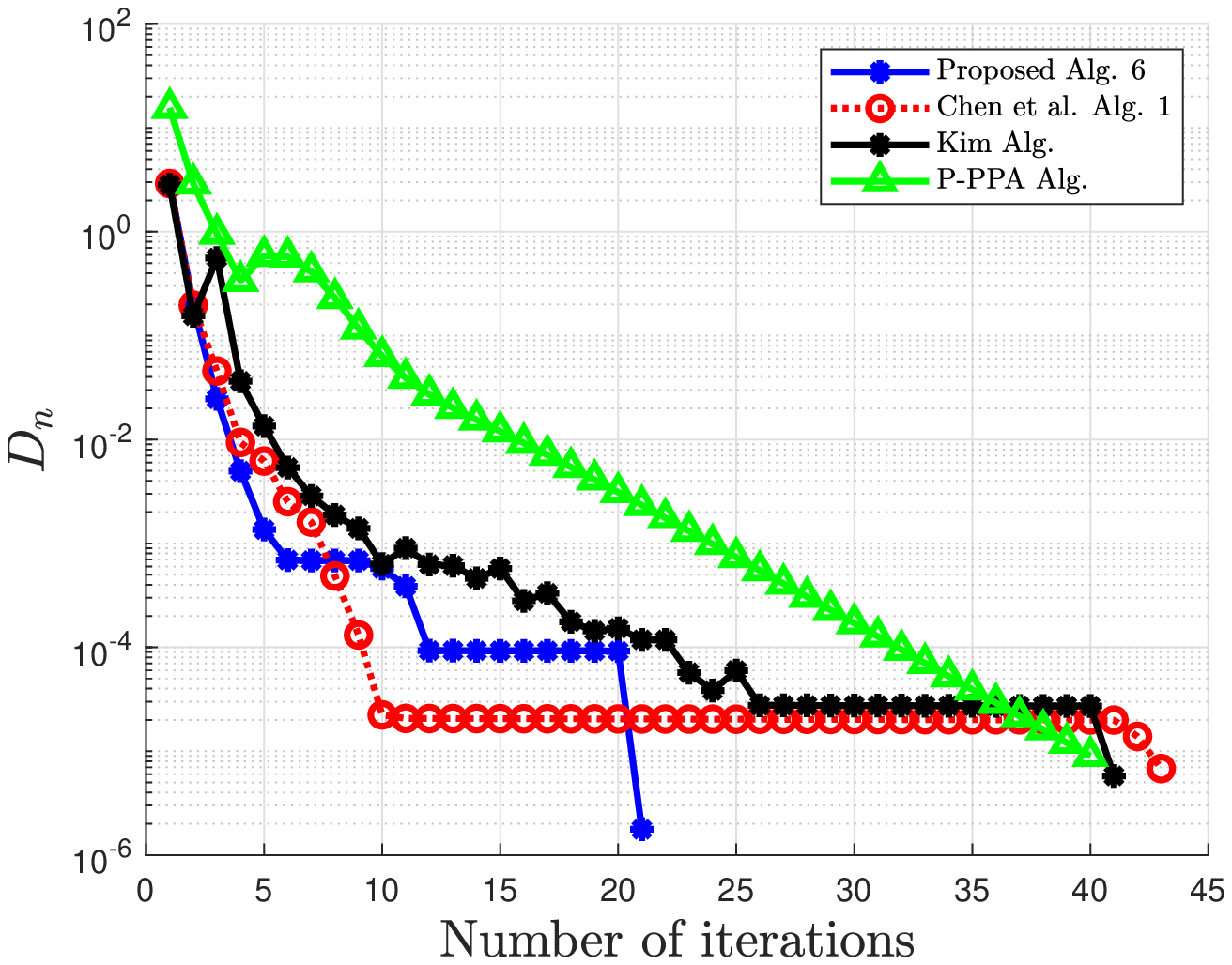}
\caption{Example \ref{ex3}: Case 2}\label{Fig2b}
\endminipage
\end{figure}

\begin{figure}[H]
\minipage{0.53\textwidth}
\includegraphics[width=\linewidth]{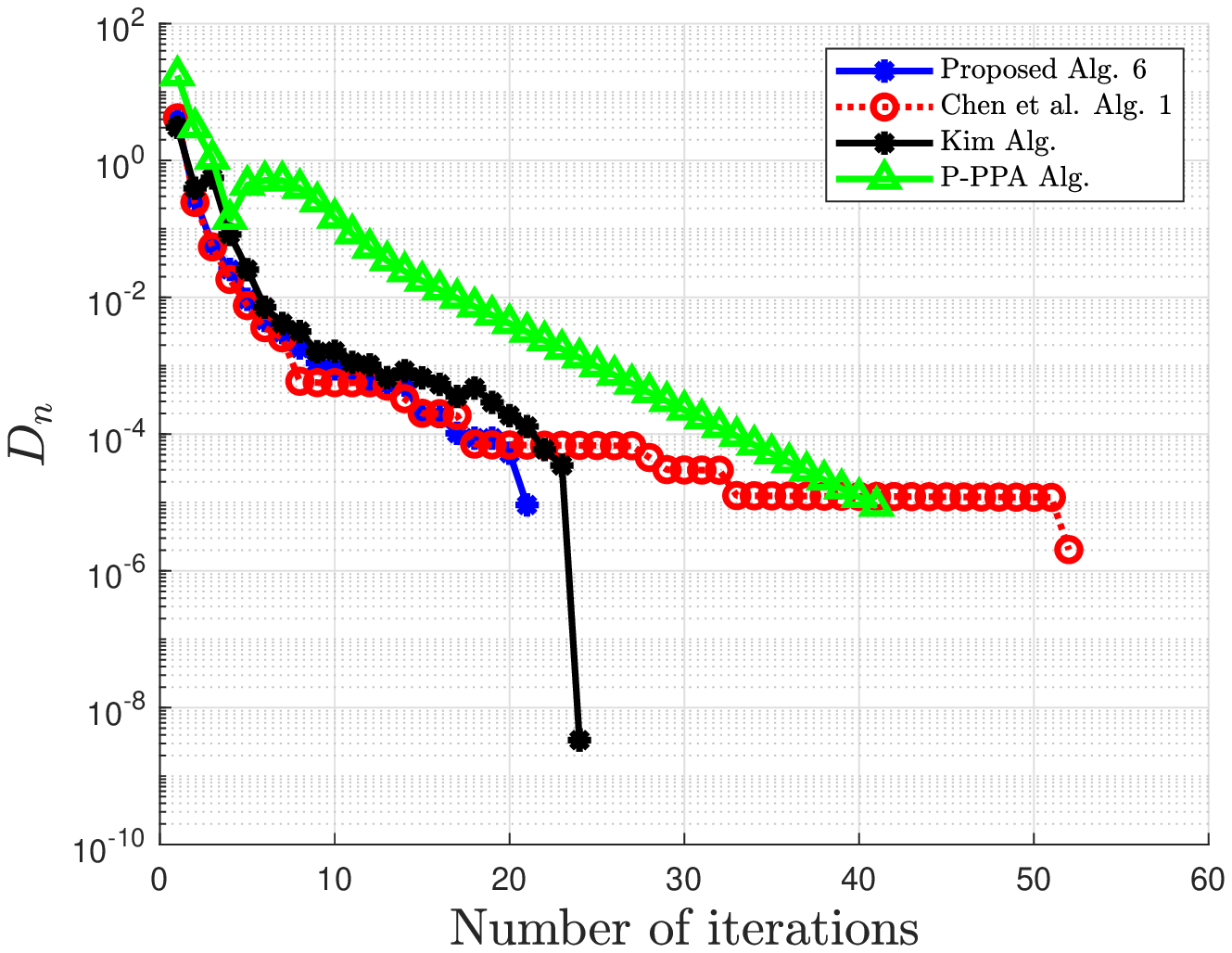}
\caption{Example \ref{ex3}: Case 3}\label{Fig2c}
\endminipage\hfill
\minipage{0.53\textwidth}
\includegraphics[width=\linewidth]{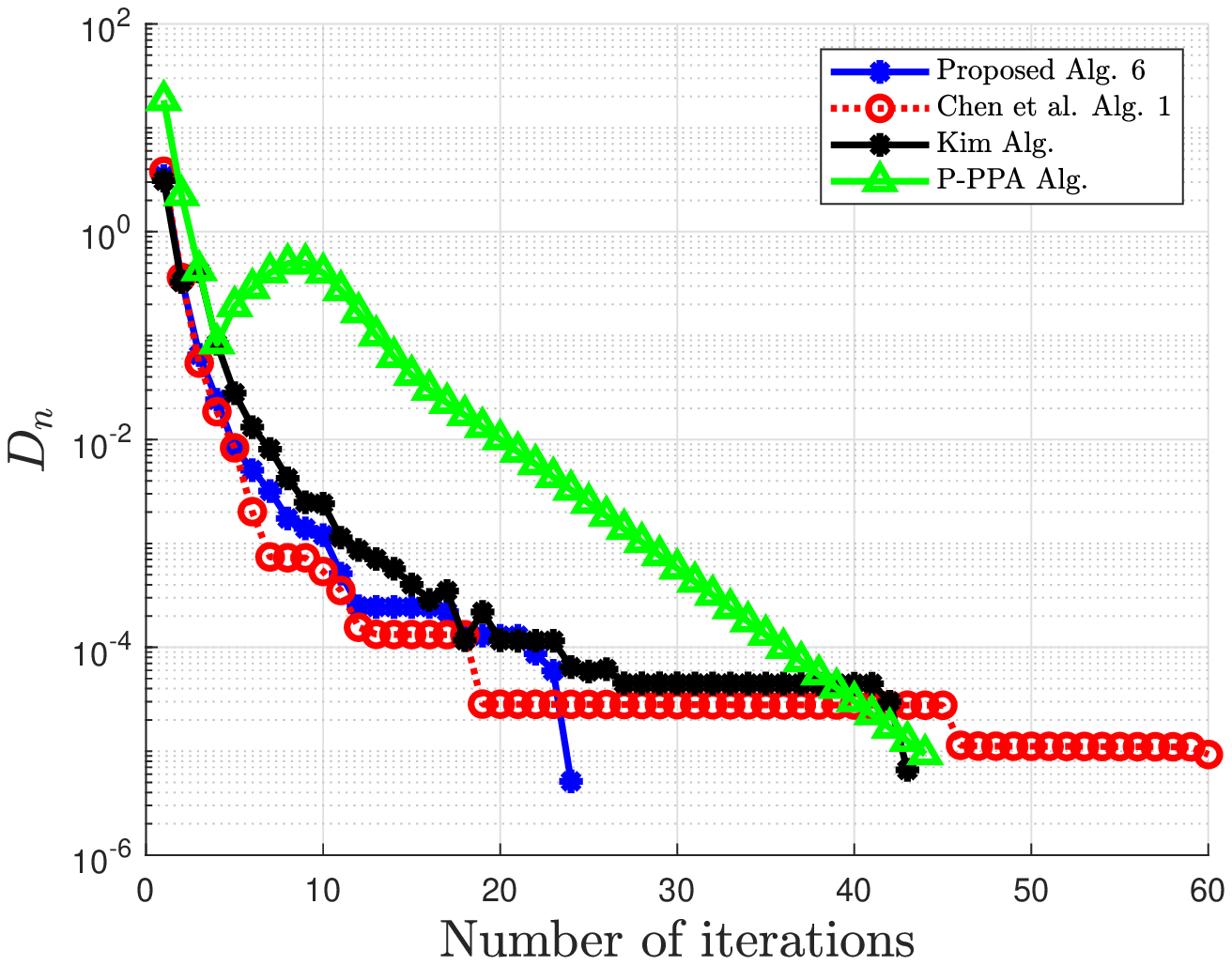}
\caption{Example \ref{ex3}: Case 4}\label{Fig2d}
\endminipage
\end{figure}

\end{exm}

%%%%%%%%%%%%%%%%%%%%%%%%%%%%%%%%%%%%%%%%%%%%%%%%%%%%%%%%%%%%%
\begin{rem}$\,$\\
From the above numerical Examples \ref{ex1} and \ref{ex3}, we give the following remarks.
\begin{itemize}
\item[{\rm (1).}] Noting from Figures \ref{Fig1a} - \ref{Fig2d} and Tables \ref{table1b} and \ref{table2b}, our proposed Algorithms are clearly easy to implement, efficiency and accurate.
\item[{\rm (2).}] Clearly from the numerical Example \ref{ex3}, our proposed Algorithm \ref{alg6} outperforms the algorithm proposed by Chen et al. in \cite{Chen}, the algorithm proposed by Kim in \cite{Kim},  and the algorithm proposed by Bai et al. in \cite{Bai1} with respect to the number of iterations and the CPU time. The proposed algorithm \ref{alg3} also competes favorably with Chen et al. Alg. 1 in \cite{Chen} and that of Kim in \cite{Kim} for the case of Example \ref{ex1}.
\end{itemize}

\end{rem}

\section{Conclusion}\label{conclude}
\noindent
We have introduced in this paper, a two-step inertial proximal point algorithm for monotone inclusion problem in Hilbert spaces. Weak convergence of the sequence of iterates are obtained under standard conditions and non-asymptotic $\textit{O}(1/n)$ rate of convergence in the ergodic sense given. We support the theoretical analysis of our method with numerical illustrations derived from basis pursuit problem and numerical implementation of two-step inertial ADMM. Preliminary numerical results show that our proposed method is competitive and outperforms some related and recent proximal point algorithms in the literature. Part of our future projects is to study two-step inertial proximal point algorithm with corrected term for the monotone inclusion considered in this paper.

\section*{Acknowledgements}
The authors are grateful to the associate editor and the two anonymous referees for their insightful comments and suggestions which have improved greatly on the earlier version of the paper.


\begin{thebibliography}{lll}
\bibitem{1} { Aluffi-Pentini  F,  Parisi V.,  Zirilli  F.}  Algorithm 617. DAFNE: a differential-equations algorithm for nonlinear equations. ACM Trans. Math. Software 10 (1984),  317-324.


\bibitem{2} {\rm  Alvarez  F.}  On the minimizing property of a second order dissipative system in Hilbert spaces. SIAM J. Control Optim. 38 (2000), 1102–1119.


\bibitem{4} {\rm  Alvarez  F.,  Attouch H. }  An inertial proximal method for maximal monotone operators via discretization of a nonlinear oscillator with damping. Set-Valued Anal. 9 (2001), 3–11.

\bibitem{3} {\rm  Alvarez F.} Weak convergence of a relaxed and inertial hybrid projection-proximal point algorithm for maximal monotone operators
in Hilbert space. SIAM J. Optim. 14 (2004), 773–782.

\bibitem{5} {\rm  Antipin A. S.} Minimization of convex functions on convex sets by means of differential equations.
Differentsial'nye Uravneniya 30 (1994), 1475-1486, 1652; translation in
Differential Equations 30 (1994), 1365–1375.

\bibitem{AttouchCabot} {\rm  Attouch  H., Cabot, A.}
Convergence rate of a relaxed inertial proximal algorithm for convex minimization. Optimization 69 (2020), 1281-1312.


\bibitem{6} {\rm  Attouch  H., Peypouquet  J.,  Redont  P.}  A dynamical approach to an inertial forward-backward algorithm for convex minimization. SIAM J. Optim. 24 (2014), 232-256.




\bibitem{Aujol}
{\rm  Aujol J.-F. , Dossal  C.,   Rondepierre  A.} Optimal Convergence Rates for Nesterov Acceleration.
 SIAM J. Optim.  29 (2019), 3131-3153.


\bibitem{Bai1}
{\rm  Bai, J., Zhang, H., Li, J.} A parameterized proximal point algorithm for separable convex optimization.
 Optim. Lett.  12 (2018), 1589-1608.

\bibitem{Bai2}
{\rm  Bai, J., Hager, W., Zhang, H.} An inexact accelerated stochastic ADMM for separable
composite convex optimization.
 Comput. Optim. Appl. 81 (2022), 479-518.

\bibitem{Bai3}
{\rm  Bai, J., Han, D., Sun, H., Zhang, H.} Convergence analysis of an inexact accelerated stochastic
ADMM with larger stepsizes.
 CSIAM Trans. Appl. Math. To appear, arXiv:2103.16154v2,
(2022).




\bibitem{Beck} {\rm A. Beck and M. Teboulle}; A fast iterative shrinkage-thresholding algorithm
for linear inverse problems, {\it SIAM J. Imaging Sci.} {\bf 2} (1) (2009), 183-202.


\bibitem{10}  {\rm  Bruck R. E., Jr.}  Asymptotic convergence of nonlinear contraction semigroups in Hilbert space. J. Functional Analysis 18 (1975), 15–26.

\bibitem{9}{\rm  Bot  R. I.,  Csetnek  E. R.,   Hendrich C.}  Inertial Douglas-Rachford splitting for monotone inclusion problems. Appl. Math. Comput. 256 (2015), 472–487.


\bibitem{B} {\rm  Bot R. I.,  Csetnek  E. R.} An inertial alternating direction method of multipliers.  Minimax Theory Appl. 1 (2016), 29-49.


\bibitem{Chambolle} {\rm  Chambolle, A.,  Pock, T.}  An introduction to continuous optimization for imaging. Acta Numer. 25,
161–319 (2016).

\bibitem{CA} {\rm  Chambolle,  A.,  Pock, T.}  On the ergodic convergence rates of a first-order primal-dual algorithm.
Math. Program. 159 (2016), Ser. A, 253–287.


\bibitem{ChamPo} {\rm A.~Chambolle, T.~Pock}; A first-order primal-dual algorithm for convex problems with applications to imaging.  {\it  J. Math. Imaging and Vision} {\bf 40}(1) (2011), 120-145.

\bibitem{ChambollePock} {\rm A. Chambolle and T. Pock};
On the ergodic convergence rates of a first-order primal–dual algorithm,
{\it Math. Program.} {\bf 159} (2016), 253-287.


\bibitem{Chen}
{\rm C.~Chen, S.~Ma, and J.~Yang};
A general inertial proximal point algorithm for mixed variational inequality problem,
{\it SIAM J. Optim.} {\bf 25} (2015), 2120–2142.


\bibitem{13c} {\rm   Cholamjiak P.,  Thong  D.V.,  Cho Y.J.} A novel inertial projection and contraction method for solving pseudomonotone variational inequality problems. Acta Appl. Math. 169 (2020), 217-245.


\bibitem{CombettesGlaudin} {\rm   Combettes, P. L.,  Glaudin, L. E.} Quasi-nonexpansive iterations on the affine hull of
orbits: from Mann's mean value algorithm to inertial methods. SIAM J. Optim. 27 (2017), 2356-2380.

\bibitem{8a} {\rm  Corman, E., Yuan, X. M.}  A generalized proximal point algorithm and its convergence rate.
SIAM J. Optim. 24 (2014), 1614-1638.

\bibitem{7}{\rm  Davis, D., Yin W.} Faster convergence rates of relaxed Peaceman-Rachford and ADMM under regularity assumptions.
Math. Oper. Res. 42 (2017), 783-805.

%\bibitem{14} {\rm  Dong  Q. L.,  Cho  Y.J.,  Zhong  L.L., Rassias Th.M.}  Inertial projection and contraction algorithms for variational inequalities. J. Global Optim. 70 (2018), 687-704.


\bibitem{DongJOGO} {\rm  Dong, Q.L., Huang, J.Z., Li, X.H.,  Cho, Y. J., Rassias, Th. M.} MiKM: multi-step inertial Krasnosel'skii–Mann algorithm and its applications. J. Global Optim. 73, 801–824 (2019).


\bibitem{C}{\rm  Dong  Q. L., Jiang D.,  Cholamjiak  P., Shehu Y.}  A strong convergence result involving an inertial forward-backward algorithm for monotone inclusions. J. Fixed Point Theory Appl. 19 (2017), 3097-3118.

\bibitem{DouglasRachford} {\rm J.~Douglas and H.H.~Rachford};
On the numerical solution of heat conduction problems in two
or three space variables,
{\it Trans. Amer. Math. Soc.} {\bf 82} (1956), 421-439.


\bibitem{12} {\rm Eckstein, J., Bertsekas, D.P.} On the Douglas-Rachford splitting method and the proximal point algorithm for maximal monotone operators. Math. Program. 55 (1992), Ser. A, 293-318.

\bibitem{Esser} {\rm Esser, E., Zhang, X., Chan, T.} A general framework for a class of first order primal-dual algorithms
for convex optimization in imaging science. SIAM J. Imaging Sci. 3(4), 1015–46 (2010).


\bibitem{17} {\rm  Gabay,  D.,  Mercier, B.}  A dual algorithm for the solution of nonlinear variational problems via finite element approximation.
Comput. Math. Appl. 2 (1976), 17-40.

%\bibitem{16} {\rm Glowinski, R.,Marrocco, A.:}  Approximation par elements finis d'ordre un et resolution par penalisationdualite
%d'une classe de problemes non lineaires, R.A.I.R.O., R2, 41-76 (1975)

\bibitem{18} {\rm  Glowinski,  R.,  Marrocco,  A.} Sur l'approximation, par \'el\'ments finis d'ordre un, et la r\'esolution, par p\'enalisation-dualit\'e, d'une classe de problèmes de Dirichlet non lin\'eaires. (French) Rev. Française Automat. Informat. Recherche Op\'erationnelle S\'er. Rouge Anal. Numér. 9 (1975), no. R-2, 41-76.

%\bibitem{Goldstein}{\rm  Goldstein, T., O'Donoghue, B., Setzer, S., Baraniuk, R.} Fast alternating direction optimization methods. SIAM J. Imaging Sci. 7(3), 1588–623 (2014).

\bibitem{19}{\rm  G$\ddot{u}$ler, O.}  New proximal point algorithms for convex minimization. SIAM J. Optim. 2 (1992), 649-664.

\bibitem{20} {\rm Hestenes,  M. R.}  Multiplier and gradient methods. J. Optim. Theory Appl. 4 (1969), 303-320.

\bibitem{He}{\rm He, B., Yuan, X.} Convergence analysis of primal-dual algorithms for a saddle-point problem: from
contraction perspective. SIAM J. Imaging Sci. 5(1), 119–49 (2012).


\bibitem{Kim}{\rm Kim, D.}, Accelerated proximal point method for maximally monotone operators. Math. Program. 190 (2021), 57–87.

\bibitem{Liang}{\rm J. Liang}, Convergence rates of first-order operator splitting methods. PhD thesis, Normandie Université; GREYC
CNRS UMR 6072, 2016.


\bibitem{LionsMercier} {\rm P.L.~Lions and B.~Mercier};
Splitting algorithms for the sum of two nonlinear operators,
{\it SIAM J. Numer. Anal.} {\bf 16} (1979), 964-979.


\bibitem{21} {\rm  Lorenz, D. A.,  Pock, T.}  An inertial forward-backward algorithm for monotone inclusions. J. Math. Imaging Vision 51 (2015), 311-325.

\bibitem{MaNi} {\rm  Ma, F., Ni, M.} A class of customized proximal point algorithms for linearly constrained convex optimization. Comp. Appl. Math. 37 (2018), 896-911.


\bibitem{22} {\rm  Maing\'e,  P. E.,  Merabet, N.} A new inertial-type hybrid projection-proximal algorithm for monotone inclusions.
Appl. Math. Comput. 215 (2010), 3149-3162.



\bibitem{26M} {\rm Martinet, B.} Régularisation d'inéquations variationnelles par approximations successives. (French) Rev. Française Informat. Recherche Opérationnelle 4 (1970), Sér. R-3, 154–158.


\bibitem{Min} {\rm Tao, M., Yuan, X.} On the optimal linear convergence rate of a generalized proximal point algorithm.
J. Sci. Comput. 74 (2018),  826-850.


\bibitem{25}{\rm  Moreau, J. J.} Proximité et dualité dans un espace Hilbertien. (French) Bull. Soc. Math. France 93 (1965), 273-299.

\bibitem{26} {\rm Moudafi, A., Elisabeth, E.}
An approximate inertial proximal method using the enlargement of a maximal monotone operator.
Int. J. Pure Appl. Math. 5 (2003), 283-299.

	
\bibitem{Pol} {\rm B.T. Polyak}, Some methods of speeding up the convergence of iterates methods, {\it U.S.S.R Comput. Math. Phys.,} {\bf 4} (5) (1964), 1-17.

\bibitem{Polyakbook} {\rm B. T. Polyak}, Introduction to optimization. New York, Optimization Software, Publications Division, 1987.


\bibitem{Poon1} {\rm C. Poon, J. Liang}, Trajectory of Alternating Direction Method of Multipliers and Adaptive Acceleration, {\it In Advances In Neural Information Processing Systems} (2019).

\bibitem{Poon2} {\rm C. Poon, J. Liang}, Geometry of First-Order Methods and Adaptive Acceleration, {\it  arXiv:2003.03910.}

\bibitem{29} {\rm  Powell,  M. J. D.} A method for nonlinear constraints in minimization problems. 1969 Optimization (Sympos., Univ. Keele, Keele, 1968) pp. 283–298 Academic Press, London.


\bibitem{34} {\rm Rockafellar, R. T.} Monotone operators and the proximal point algorithm. SIAM J. Control Optim. 14 (1976), 877–898.

\bibitem{Rockafellar27}{\rm R. T. Rockafellar};  Monotone operators and the proximal point algorithm, {\it SIAM J. Control. Optim.} {\bf 14} (1976) 877-898.


\end{thebibliography}
\end{document}